\newtheorem{theorem}{Theorem}[section]
\newtheorem{lemma}[theorem]{Lemma}
\newtheorem{proposition}[theorem]{Proposition}
\newtheorem{corollary}[theorem]{Corollary}
\newenvironment{proof}{\normalsize {\sc Proof}:}{{\hfill $\Box$}}
\newenvironment{proofof}[1]{\normalsize {\sc Proof of #1}:}{{\hfill $\Box$}}
\def\margin_comment#1{\marginpar{\sffamily{\small #1\par}\normalfont}}
\def \N {\mathbb{N}}
\def \Z {\mathbb{Z}}
\def \G {\mathcal{G}}
\def \K {\mathcal{K}}
\def \C#1#2{\mathcal{C}(#1,#2)}
\def \ch {\rm chr}
\def \adj {\sim}
\def \nonadj {\not\sim}
\newenvironment{mylist}{\begin{list}{}{
\setlength{\parskip}{0mm}
\setlength{\topsep}{1mm}
\setlength{\parsep}{0mm}
\setlength{\itemsep}{0.5mm}
\setlength{\labelwidth}{7mm}
\setlength{\labelsep}{3mm}
\setlength{\itemindent}{0mm}
\setlength{\leftmargin}{12mm}
\setlength{\listparindent}{6mm}
}}{\end{list}}
\title{Generalising some results about right-angled Artin groups to graph products of groups}
\author{Derek F. Holt and Sarah Rees}
\date{14th October 2011}
\begin{document}
\maketitle

\begin{abstract}
We prove three results about the graph product
$G=\G(\Gamma;G_v, v \in V(\Gamma))$  of groups $G_v$ over a graph $\Gamma$.
The first result generalises a result of Servatius, Droms and Servatius, proved by them for right-angled Artin groups; we prove a necessary
and sufficient condition on a finite graph $\Gamma$ for the kernel of the map
from $G$ to the associated direct
product to be free (one part of this result already follows from a result
in S. Kim's Ph.D.
thesis). The second result generalises
a result of Hermiller and  \u{S}uni\'{c}, again from right-angled Artin groups;
we prove that, for a graph $\Gamma$ with finite chromatic number, $G$ has a
series in which every factor is a free product of vertex groups.
The third result provides an alternative proof of a theorem due to Meier,
which provides necessary and sufficient conditions on a finite
graph $\Gamma$ for $G$ to be hyperbolic.
\end{abstract}

\section{Introduction}
Given a simplicial graph $\Gamma$, and vertex groups $G_v$ for each
$v \in V(\Gamma)$, we use the notation $\G(\Gamma;G_v, v \in V(\Gamma))$ to
denote the associated graph product of groups. Where there is no ambiguity in
the choice of the vertex groups $G_v$, we may abbreviate this to $\G(\Gamma)$.  

The group $\G(\Gamma)$ is the `largest' group generated by the vertex groups
such that $G_u$ and $G_v$ commute whenever $u$ and $v$ are joined by an edge in
$\Gamma$. More precisely, it is the quotient of the free product of
the groups $G_v$ by the normal closure of the set of all commutators
$[g_u,g_v]$ with $g_u \in G_u$, $g_v \in G_v$ and $u \adj v$. 
The right-angled Artin groups, also known as graph groups, are precisely the graph products in which
all vertex groups are infinite cyclic groups, while the right-angled Coxeter
groups arise from vertex groups of order 2.

In this article we prove three results about the graph product
$G=\G(\Gamma;G_v, v \in V(\Gamma))$  of groups $G_v$ over a graph $\Gamma$.
The first result, found in Theorem~\ref{only_if} and Theorem~\ref{SDS:Thm2},
generalises a result of Servatius, Droms and Servatius,
proved for right-angled Artin groups in \cite{SDS}; we prove a necessary
and sufficient condition on a finite graph $\Gamma$ for the kernel of the map
from $G$ to the associated direct
product to be free. 
Part of this result was already known, proved in \cite{Kim}.
The second result, Theorem~\ref{Thm:HS}, generalises
a result of Hermiller and \u{S}uni\'{c} \cite{HS} again from right-angled Artin groups;
we prove that, for a graph $\Gamma$ with finite chromatic number, $G$ has a
series in which every factor is a free product of vertex groups.
The third result, Theorem~\ref{gphyperbolic}, provides an alternative proof of a theorem due to Meier
\cite{meier}, which provides necessary and sufficient conditions on a finite
graph $\Gamma$ for $G$ to be hyperbolic.
 
While the graph product construction might seem merely to be a
generalisation of direct and free products, it has produced some groups with
rather interesting properties, including
a group (a subgroup of a right-angled Artin group)
that has FP but is not finitely presented  \cite{BestvinaBrady} and a group ($F_2 \times F_2$)  with insoluble subgroup membership
problem \cite{Mihailova}.
Right-angled Artin groups \cite{Charney07} and Coxeter groups 
have been particularly studied; their actions on CAT(0) cubical complexes
give them interesting geometrical properties.

Graph products were introduced by Green in her PhD thesis \cite{Green}, where
in particular a normal form was developed and the graph product construction
was shown to preserve  residual finiteness; this work was extended by Hsu and
Wise in \cite{HsuWise}, where right-angled Artin groups were shown
to embed in right-angled Coxeter groups and hence to be linear.
The preservation of semihyperbolicity, automaticity (as well as 
asynchronous automaticity and biautomaticity) and the possession of a 
complete rewrite system under graph products is proved in \cite{HM},
necessary and sufficient conditions for the preservation of hyperbolicity in \cite{meier}, the
question of when the group is virtually free in \cite{LohreySenizergues},
of preservation of orderability in \cite{Chiswell}. The representation of
the group as a graph product of directly indecomposable groups was shown
to be unique in \cite{Radcliffe}.
Automorphisms and the structure of centralisers for graph products
of groups have been the subject of recent study, and in particular
graph products defined over random graphs have provoked some
interest for their applications \cite{CostaFarber,CharneyRuaneStambaughVijayan,CharneyFarber}.

\section{Notation and basic properties}

We use standard notation from graph theory. So for $v,w \in V(\Gamma)$
we say that $v,w$ are adjacent if $\{v,w\}$ is an edge, and write
$v \adj w$. If $v,w$ are non-adjacent we write $v \nonadj w$.

A normal form for elements of graph products is 
developed in \cite{Green}. Each element $g \in \G(\Gamma)$ can be written as
a product $g_1 \cdots g_k$ with each $g_i$ in a vertex group
$G_{v_i}$. We shall call such a product an {\em expression} for $g$,
and the elements $g_i$ the {\em syllables} of the expression.
The expression is {\em reduced} if the syllable length $k$ is minimal among
all such expressions for $g$.  Note that the identity element of $G$ has
syllable length 0.

The operation of
replacing $g_ig_{i+1}$ by $g_{i+1}g_i$ within an expression,
when $v_i \adj v_{i+1}$, is called a {\em shuffle}.
We need the following technical result. 
\begin{proposition}
\label{reduced_exp}
\cite[Theorem 3.9]{Green}
For $g \in \G(\Gamma)$,
any expression for $g$ can be
transformed to any reduced expression for $g$ 
by a succession of shuffles and group multiplications
within the vertex groups.
\end{proposition}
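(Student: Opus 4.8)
The plan is to separate the statement into two facts and then combine them. Call an expression $g_1 \cdots g_k$ (with $g_i \in G_{v_i}$) \emph{combinatorially reduced} if no syllable is trivial and there is no \emph{removable pair}, i.e.\ no indices $i < j$ with $v_i = v_j$ and $v_m \adj v_i$ for all $i < m < j$. The two facts are: \textbf{(A)} every expression can be transformed, using shuffles and vertex-group multiplications, into a combinatorially reduced one; and \textbf{(B)} any two combinatorially reduced expressions for the same $g$ are related by shuffles alone. Granting these, the proposition follows at once: given an arbitrary expression $w$ and any reduced expression $w'$ for $g$, use (A) to transform $w$ into some combinatorially reduced $w''$; since $w'$ is of minimal syllable length it is \emph{a fortiori} combinatorially reduced (a shortest expression can admit neither a trivial syllable nor a removable pair, each of which would let us shorten it), so by (B) the words $w''$ and $w'$ differ by shuffles, and the chain $w \rightsquigarrow w'' \rightsquigarrow w'$ of shuffles and multiplications is the one required.

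Fact (A) I would prove by induction on syllable length. If the expression contains a trivial syllable, delete it by a vertex-group multiplication; if it contains a removable pair $i < j$, then since every intervening $v_m$ is adjacent to $v_i$ we may shuffle $g_i$ rightward until it sits immediately before $g_j$, then amalgamate $g_i g_j$ inside $G_{v_i}$ (deleting the result if it is trivial). Each such step strictly decreases the length, so the process terminates at a combinatorially reduced expression. This direction is routine.

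Fact (B) is the heart of the matter, and I would establish it by the van der Waerden group-action method. Let $\mathcal{R}$ be the set of combinatorially reduced expressions, let $\approx$ denote the equivalence relation on $\mathcal{R}$ generated by shuffles, and set $N = \mathcal{R}/\!\approx$. I would define a left action of $\G(\Gamma)$ on $N$ thus: for $v \in V(\Gamma)$ and $1 \neq h \in G_v$ and a class $[w]$ with $w = g_1 \cdots g_k$, if some syllable $g_i \in G_v$ can be shuffled to the front (that is, $v_m \adj v$ for all $m < i$) then define $h \cdot [w]$ by shuffling that $g_i$ to the front and replacing it by $hg_i$, deleting it when $hg_i = 1$; otherwise set $h \cdot [w] = [hg_1 \cdots g_k]$. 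One first checks that a reduced word has at most one syllable of $G_v$ shuffleable to the front (two such would constitute a removable pair), so the recipe is unambiguous on a representative. The crux is then to verify that it descends to $\approx$-classes, always returns a combinatorially reduced word, and respects the defining relations of $\G(\Gamma)$ — both the internal multiplication within each $G_v$ and the commuting relations for adjacent vertices. This verification, a finite but delicate case analysis turning on whether the leftmost relevant vertices coincide with, or are adjacent to, $v$, is where I expect the real work to lie.

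Once the action is in place, (B) follows quickly. For a reduced $w = g_1 \cdots g_k$ representing $g$, unwind $g \cdot [\,] = g_1 \cdot (g_2 \cdot (\cdots (g_k \cdot [\,])))$ and note that the reducedness of $w$ forces every step to fall into the ``prepend'' case: at stage $j$ a syllable of $G_{v_j}$ shuffleable to the front of $g_{j+1}\cdots g_k$ would, back in $w$, yield a removable pair. Hence $g \cdot [\,] = [w]$. Consequently, if $w$ and $w'$ are both reduced expressions for $g$ then $[w] = g \cdot [\,] = [w']$, so $w$ and $w'$ differ by shuffles, which is exactly (B). In passing this shows that all combinatorially reduced expressions for $g$ share a common length, whence ``combinatorially reduced'' and ``reduced'' coincide, tidying the remark used in the first paragraph.
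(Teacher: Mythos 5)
The paper offers no proof of this proposition to compare yours against: it is quoted verbatim as \cite[Theorem 3.9]{Green} and used as a black box throughout. What you have written is therefore an independent reconstruction of Green's normal form theorem. Your route --- (A) every expression can be carried by shuffles and vertex-group multiplications to a combinatorially reduced one, plus (B) any two combinatorially reduced expressions for the same element differ by shuffles alone, with (B) proved by making $\G(\Gamma)$ act on the set of shuffle-classes of reduced expressions --- is the classical van der Waerden orbit method, which is the standard way such normal form theorems (for free products, and for graph products in Green's thesis and its descendants) are established. The pieces you do argue are correct: the uniqueness of a front-shuffleable $G_v$-syllable, the reduction of the proposition to (A) and (B), the unwinding argument showing $g \cdot [\,] = [w]$ for reduced $w$ (a front-shuffleable syllable of $G_{v_j}$ in $g_{j+1}\cdots g_k$ would indeed produce a removable pair in $w$), and the closing observation that (B) forces ``combinatorially reduced'' and ``minimal syllable length'' to coincide.

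The caveat is that the substance of (B) --- that each map $\alpha_h$ is well defined on $\approx$-classes, sends reduced classes to reduced classes, and satisfies both the vertex-group relations $\alpha_h\alpha_{h'}=\alpha_{hh'}$ and the commutation relations $\alpha_h\alpha_{h'}=\alpha_{h'}\alpha_h$ when $h\in G_v$, $h'\in G_u$ with $u \adj v$ --- is announced but not carried out, and by your own admission this is where all the real work lies. These verifications do go through; for instance, in the deletion case $hg_i=1$ one must check that removing $g_i$ cannot create a removable pair, and it cannot, because every syllable preceding $g_i$ lies in a group adjacent to $v(g_i)$, so any pair that becomes removable after the deletion was already removable beforehand. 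But as written, your text is a correct strategy whose hardest step is a sketch rather than a proof. If it is meant to stand alone rather than to point back at \cite[Theorem 3.9]{Green}, the case analyses for well-definedness and for the two families of relations must be written out in full.
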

Now suppose that, for each vertex $v$ of $\Gamma$, $X_v$ is a generating set
for $G_v$, and that $X = \cup_{v \in V(\Gamma)} X_v.$
The next result is a consequence of Proposition~\ref{reduced_exp}.
\begin{proposition}
\label{geodesic_word}
For $g \in \G(\Gamma)$ a non-geodesic word for $g$ over $X$ can
be reduced to a shorter representative by a succession of
shuffles followed by a replacement of a subword over $X_v$
by a shorter such word, for some vertex $v$ of $\Gamma$.
\end{proposition}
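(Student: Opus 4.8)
The plan is to read the given word as an expression in the sense of Proposition~\ref{reduced_exp}, run Green's reduction on it, and carry along a companion word over $X$ that records what the syllables look like as words. Write $w = x_1\cdots x_n$ with $x_i \in X_{v_i}$, and regard it as the expression whose $i$-th syllable is the single generator $x_i \in G_{v_i}$. By Proposition~\ref{reduced_exp} there is a sequence of shuffles and group multiplications within vertex groups carrying this expression to a reduced expression $h_1\cdots h_m$. First I would attach to the syllables occurring at each stage a partition of a companion word $\tilde w$ into consecutive blocks, where the block belonging to a syllable $s \in G_v$ is a \emph{geodesic} word over $X_v$ representing $s$. Initially $\tilde w = w$ and each block is a single letter, which is geodesic since the generators are nontrivial. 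Throughout, $|s|_v$ denotes the geodesic length of $s\in G_v$ over $X_v$, and $\ell(g)$ the geodesic length of $g$ over $X$.

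Next I would verify that this data can be maintained and that $\tilde w$ changes only by shuffles. A syllable-shuffle interchanges two adjacent syllables whose vertices $v', v''$ satisfy $v' \adj v''$; since $G_{v'}$ and $G_{v''}$ commute in $G$, every letter of the block over $X_{v'}$ commutes with every letter of the block over $X_{v''}$, so the two blocks can be interchanged in $\tilde w$ by word-level shuffles, leaving $|\tilde w|$ unchanged. A within-vertex multiplication merges two adjacent syllables $s',s'' \in G_v$; their blocks are then adjacent, and their concatenation $B$ is a word over the single $X_v$ representing $s's''$. If $B$ is geodesic in $G_v$, I keep it as the block of the new syllable, so the invariant persists and $\tilde w$ as a string is unchanged. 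If $B$ is \emph{not} geodesic in $G_v$, I stop: at that moment $\tilde w$ has been obtained from $w$ by shuffles alone, $B$ is a contiguous subword of $\tilde w$ lying over one vertex group $X_v$, and replacing $B$ by a shorter word for $s's''$ shortens the representative exactly in the manner asserted.

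It then remains to guarantee that this second case actually occurs when $w$ is non-geodesic, and here the one quantitative input enters. Tracking the quantity $\sum_s |s|_v$ summed over the syllables of the current expression, the two cases above show it is non-increasing under Green's moves and strictly decreased exactly at a non-geodesic (shortening) merge, while shuffles preserve it. Since Proposition~\ref{reduced_exp} lets me reduce \emph{any} word for $g$ to one fixed reduced expression $h_1\cdots h_m$, and the quantity starts at the word's length and ends at $\sum_j |h_j|_v$, I obtain $\sum_j |h_j|_v \le |w_0|$ for every word $w_0$ representing $g$; writing out $h_1\cdots h_m$ with geodesic blocks gives the reverse inequality, so $\ell(g) = \sum_j |h_j|_v$. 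Consequently, if $w$ is non-geodesic then $n = |w| > \ell(g) = \sum_j|h_j|_v$, so the quantity must strictly drop somewhere along the reduction of $w$; a shortening merge therefore occurs, and the first one delivers the required reduction.

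I expect the main obstacle to be the bookkeeping of the companion word rather than any deep idea. One must check carefully that the block invariant (each block a geodesic word for its syllable) survives the length-preserving merges, and that a syllable-shuffle is genuinely realisable by letter-level shuffles of the corresponding blocks via the commutativity of adjacent vertex groups; and one must confirm that length-preserving merges leave $\tilde w$ unchanged as a string, so that the final $\tilde w$ is reached from $w$ by shuffles alone. Once this is in place, the identification of $\tilde w$ and of the offending single-vertex subword $B$, together with the length formula $\ell(g)=\sum_j|h_j|_v$, makes the conclusion immediate.
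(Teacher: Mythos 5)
Your argument is correct, and it runs on the same engine as the paper's proof --- Proposition~\ref{reduced_exp}, with syllable shuffles realised as letter-level shuffles and the word carried along Green's reduction until a multiplication inside a vertex group can shorten it --- but the logical organisation is genuinely different. The paper decomposes $w$ into \emph{maximal} subwords over the $X_v$ and inducts on syllable length: if the expression is unreduced, it follows the reduction to the first multiplication (which leaves the word unchanged but drops the syllable count, so induction applies); if the expression is reduced, it simply \emph{asserts} that one of the subwords $w_i$ must be non-geodesic. That assertion is precisely what your argument makes explicit: it amounts to the formula $\ell(g)=\sum_j |h_j|_{v_j}$ for a reduced expression $h_1\cdots h_m$, which you derive from the monotonicity of $\sum_s |s|_v$ under shuffles and merges. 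So your monotone-invariant bookkeeping replaces the paper's induction, is self-contained where the paper's base case is tacit, and yields as a by-product the useful fact that geodesic length equals the sum of syllable geodesic lengths. Two small caveats: your initialisation (``each single letter is a geodesic block'') needs the letters of $X$ to be non-trivial in their vertex groups --- if some letter is trivial, delete it first, which is already a replacement of the required form; and your monotonicity requires reading Green's ``multiplications'' as merges of adjacent same-vertex syllables, never splittings, which is also the reading the paper's own proof relies on.
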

\begin{proof} 
Suppose that $w$ is a non-geodesic word over $X$ representing $g$. We can
write $w$ as a
concatenation $w_1\cdots w_k$ of maximal subwords over the generating sets
$X_v$ and hence find an expression $g_1\cdots g_k$ for $g$ for which $g_i$
is an element of a vertex group $G_{v_i}$, represented by the subword $w_i$.

We prove the result by induction on $k$.

If the expression is reduced, then one of the subwords $w_i$ must
be non-geodesic, and hence can be replaced by a shorter subword.
This includes the case where $k=1$.

Otherwise, the expression can be transformed to a reduced expression by
a succession of shuffles and multiplications within
vertex groups. The first such multiplication must be applied to an
expression represented by a word that can be derived from $w$ by a series of
shuffles only. The expression resulting from that multiplication,
which is represented by the same word, has shorter syllable length,
and the result now follows by induction.
\end{proof}

\section{Generalising Servatius, Droms and Servatius}
In this section we generalise two theorems of Servatius, Droms and Servatius.

In \cite[Theorem 1]{SDS} the following is proved:
\begin{quote} Let $G$ be the right-angled Artin group
defined by the $n$-cycle $C_n$ with $n \ge 3$.
Then $G'$ has a subgroup isomorphic to the 
fundamental group of the orientable surface of genus $1+(n-4)2^{n-3}$.
\end{quote}
We generalise this to prove the same statement
for a graph product of groups, namely:
\begin{theorem}
\label{only_if}
Let $G=\G(\Gamma;G_v, v \in V(\Gamma))$ be a graph of non-trivial groups,
for which $\Gamma$ is 
a cycle of length $n$ with $n \ge 3$.
Then the kernel of the natural map from $G$ to $G_{v_1} \times \cdots \times
G_{v_n}$ contains the fundamental group of the orientable
surface of genus $1 + (n-4)2^{n-3}$.
\end{theorem}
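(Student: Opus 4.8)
The plan is to reduce to cyclic vertex groups, construct an explicit closed orientable surface immersed in a complex for $G$ whose $\pi_1$-image lies in the kernel $K$ of $\phi:G\to G_{v_1}\times\cdots\times G_{v_n}$, and then read off the genus from an Euler characteristic count. For the reduction, pick a non-trivial $a_v\in G_v$ for each $v$. By Proposition~\ref{reduced_exp} a reduced expression over the cyclic subgroups $\langle a_v\rangle$ stays reduced in $G$, since the only moves needed are shuffles and within-vertex multiplications, both of which preserve membership in the $\langle a_v\rangle$; hence $\langle a_v : v\in V(\Gamma)\rangle\cong\G(\Gamma;\langle a_v\rangle)$ and $\phi$ restricts to the analogous map onto $\prod_v\langle a_v\rangle$. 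Therefore $\ker\phi\cap\langle a_v:v\rangle$ is precisely the kernel for this sub-graph product and it suffices to treat cyclic vertex groups of order $\ge 2$. When $n=3$ the graph is a triangle, $G$ is the full direct product, $K=1$, and the claimed genus is $1+(-1)2^{0}=0$, so there is nothing to prove.

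For $n\ge 4$ I would work with the CAT(0) cube complex $\widetilde X$ on which $G$ acts and its non-positively curved quotient $X=\widetilde X/G$; for infinite-cyclic vertex groups $X$ is the Salvetti complex, with one loop per vertex generator and one square per commuting pair $[a_{v_i},a_{v_{i+1}}]=1$. Because non-adjacent vertex groups generate a free product, each $[a_{v_i},a_{v_j}]$ with $v_i\nonadj v_j$ has infinite order and lies in $K$; these commutators and their conjugates are the building blocks. The surface is assembled by gluing copies of the commutation squares, respecting orientations, into a polygonal identification scheme $S$ whose single boundary relator is a product of commutators that evaluates trivially in $G$ (checked by a sequence of shuffles, using Proposition~\ref{reduced_exp}). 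This produces a closed orientable surface carrying a combinatorial immersion $f:S\to X$ whose edge-loops all represent commutators of non-adjacent generators, so that $\phi\circ f_{*}=1$ and $f_{*}\pi_1(S)\subseteq K$. The genus is then extracted from $\chi(S)=V-E+F$: counting the squares together with the edge- and vertex-identifications should give $\chi(S)=-(n-4)2^{n-2}$, whence $S$ has genus $1+(n-4)2^{n-3}$.

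The main obstacle is the explicit construction itself. One must glue the commutation squares so that $S$ is genuinely a closed orientable surface — every vertex link a single cycle — while realising the exact exponential count $2^{n-3}$; since the commutators do not mutually commute, the scheme closes up only after the pattern is repeated around the cycle, and pinning down this repetition is the combinatorial heart of the argument. It then remains to prove that $f$ is $\pi_1$-injective, so that $\pi_1(S)\cong\Sigma_{1+(n-4)2^{n-3}}$ embeds rather than collapsing. For this I would use Propositions~\ref{reduced_exp} and~\ref{geodesic_word} to verify that the words labelling the tiles, and the way they meet at each vertex, admit no shuffle-and-cancellation, i.e.\ are locally geodesic; this makes $f$ a local isometry, so it lifts to an isometric embedding $\widetilde S\hookrightarrow\widetilde X$ of the plane into the CAT(0) cube complex and is therefore $\pi_1$-injective. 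Injectivity together with the Euler characteristic count yields the required embedding $\Sigma_{1+(n-4)2^{n-3}}\hookrightarrow K$.
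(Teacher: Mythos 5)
Your reduction to cyclic vertex groups is correct (it is the paper's Lemma~\ref{H_lemma}, proved there by citing Green's subgroup theorem rather than by a normal-form argument), the $n=3$ case is fine, and your target $\chi(S)=(4-n)2^{n-2}$ is the right number. But there is a genuine gap, and you have named it yourself: the explicit identification scheme for the surface --- which copies of the commutation squares are used, along which edges they are glued, why every vertex link is a single cycle, and why the count comes out to exactly $(4-n)2^{n-2}$ --- is never produced. You call this ``the main obstacle'' and ``the combinatorial heart of the argument'', and that is exactly what it is: every subsequent step (the immersion $f$, the claim $f_*\pi_1(S)\subseteq \K(G)$, the local-isometry verification and hence $\pi_1$-injectivity) is contingent on data you have not constructed, so nothing after the first paragraph can be checked. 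The injectivity step is itself only a sketch: to certify that a combinatorial map from a squared surface into the Salvetti complex is a local isometry you must verify that each vertex link of $S$ embeds as a full subgraph of the link in the target, which again cannot be done without the explicit gluing pattern.

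The difficulty dissolves if, instead of building $S$ abstractly and immersing it in the quotient complex, you work inside a cover, which is what the paper (following \cite{SDS}) does. Let $Z$ be the cover of the presentation complex $X_G$ with $\pi_1(Z)=\K(G)$: its 1-skeleton is the Cayley graph of $G/\K(G)\cong G_{v_1}\times\cdots\times G_{v_n}$, and its 2-cells are the power cells together with the lifts of the commutation squares for the cyclically adjacent pairs $v_i \adj v_{i+1}$. Take $Y$ to be the ``unit cube'' subcomplex: all vertices and edges on paths from a fixed base vertex labelled by words $x_{v_{i_1}}\cdots x_{v_{i_k}}$ with the $v_{i_j}$ distinct, together with every 2-cell of $Z$ whose 1-skeleton lies in $Y$. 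Then $Y$ is a closed orientable surface for free (the link of each of its $2^n$ vertices is the $n$-cycle $\Gamma$ itself), and the count $V=2^n$, $E=n2^{n-1}$, $F=n2^{n-2}$ gives $\chi(Y)=(4-n)2^{n-2}$ immediately; moreover the image of $\pi_1(Y)$ lies in $\K(G)$ automatically because $Y\subseteq Z$. What remains --- that the inclusion $Y\subseteq Z$ is $\pi_1$-injective, so that $\K(G)$ contains the surface group itself and not a proper quotient --- is proved combinatorially, with no CAT(0) geometry: any word representing $1$ in $G$ is carried to the empty word by cancellations, power deletions and shuffles (Proposition~\ref{reduced_exp}), and $Y$ is closed under the corresponding elementary homotopies because it contains every commutator cell of $Z$ meeting it in two adjacent edges, and (vacuously) every power cell whose 1-skeleton it contains. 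In short, choosing the right cover and the right subcomplex does all the work that your proposal leaves to an unconstructed gluing scheme; if you want to salvage your approach as written, producing that scheme and verifying its link condition is precisely the part you must supply.
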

In fact this result was already known for $n \geq 5$ (and it is straightforward
to prove for n=3,4); however our proof is short and elementary.
For $n \geq 5$ the result is proved in Sang-hyun Kim's 2007 thesis
\cite[Theorem 3.6]{Kim} using Van Kampen diagrams, 
and for $n\geq 6$ another quite different proof is provided in
\cite[Corollary 1.3]{FuterThomas}.

In \cite[Theorem 2]{SDS} the following is proved:
\begin{quote}
Let $G$ be the right-angled Artin group defined by a finite graph
$\Gamma$. Then $G'$
is free if and only if $\Gamma$ does not contain $C_n$ as an
induced subgraph for any $n \geq 4$.
\end{quote}
We generalise this in the following result.

\begin{theorem}
\label{SDS:Thm2}
Let $G=\G(\Gamma;G_v, v \in V(\Gamma))$ be a graph product of non-trivial
groups with $V(\Gamma) = \{v_1,\ldots,v_n\}$ finite.
Then the kernel of the natural map from $G$ to $G_{v_1} \times \cdots \times
G_{v_n}$ is free if and only if $\Gamma$ does not contains a cycle of length
$4$ or more as an induced subgraph.
\end{theorem}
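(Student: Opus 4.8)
The plan is to prove the two directions separately, treating necessity via Theorem~\ref{only_if} and sufficiency by induction on $|V(\Gamma)|$ driven by the chordal structure of $\Gamma$. Throughout I write $\phi\colon G \to G_{v_1}\times\cdots\times G_{v_n}$ for the natural map and $K=\ker\phi$. Observe that $\phi$ is nothing but the natural map onto the graph product over the complete graph on $V(\Gamma)$, so that the extra relations are exactly the commutators $[g_u,g_w]$ with $u\nonadj w$, and $K$ is their normal closure in $G$.

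For necessity I would argue contrapositively. Suppose $\Gamma$ contains an induced cycle $C$ of length $m\ge 4$, and let $H=\G(C)\le G$ be the subgroup generated by the vertex groups of $C$; by the normal form (Proposition~\ref{reduced_exp}) this is the graph product over the induced subgraph $C$. The restriction $\phi|_H$ has image in $\prod_{v\in C}G_v$ and coincides with the natural map of $H$, so $K\cap H$ is precisely its kernel. By Theorem~\ref{only_if} this kernel contains the fundamental group of the orientable surface of genus $1+(m-4)2^{m-3}\ge 1$, which is not free. Hence $K\cap H$, and therefore $K$ itself, fails to be free, since subgroups of free groups are free by the Nielsen--Schreier theorem.

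For sufficiency assume $\Gamma$ has no induced cycle of length $\ge 4$, that is, $\Gamma$ is chordal, and induct on $n=|V(\Gamma)|$, the case $n\le 1$ being immediate. A chordal graph has a simplicial vertex $v$, so $N(v)$ is a clique and $L:=\G(N(v))=\prod_{u\in N(v)}G_u$. Writing $\Gamma'=\Gamma-v$ and $G'=\G(\Gamma')$, and using the standard decomposition of a graph product along the star of a vertex (readily derived from Proposition~\ref{reduced_exp}), I would express $G$ as the amalgamated free product
\[ G = G' *_{L} (G_v \times L). \]
The key point is that $\phi$ is injective on both ``clique-like'' factors: since $L$ and $G_v\times L$ embed into $\prod_v G_v$ under $\phi$, we get $K\cap L=1$ and $K\cap(G_v\times L)=1$, and by normality of $K$ the same holds for every conjugate. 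Meanwhile $K\cap G'=K'$, the kernel of the natural map for $G'$, which is free by the inductive hypothesis since the induced subgraph $\Gamma'$ is again chordal.

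Now let $K$ act on the Bass--Serre tree $T$ of this amalgam. By the previous paragraph $K$ meets every edge stabiliser (a conjugate of $L$) and every vertex stabiliser of $(G_v\times L)$-type trivially, while the $G'$-type vertex stabilisers are conjugates of $K'$. Bass--Serre theory applied to an action with trivial edge stabilisers then exhibits $K$ as a free product of the vertex stabilisers together with a free group arising from $\pi_1(K\backslash T)$; the nontrivial vertex stabilisers are conjugates of the free group $K'$. A free product of free groups is free, so $K$ is free and the induction closes. I expect the sufficiency direction to be the main obstacle, and within it the decomposition $G=G'*_L(G_v\times L)$ together with the precise identification of how $K$ meets the vertex and edge groups of the splitting; the necessity direction is essentially immediate from Theorem~\ref{only_if} once induced subgraphs are recognised as graph-product subgroups.
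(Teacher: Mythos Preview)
Your proof is correct and follows essentially the same strategy as the paper: the necessity direction is exactly the paper's argument (Theorem~\ref{only_if} plus the retract/subgroup embedding of Lemma~\ref{H_lemma}), and the sufficiency direction is again an induction via an amalgamated free product over a clique together with Bass--Serre theory applied to the normal subgroup $K$.

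The one genuine difference is the source of the splitting. The paper invokes the clique-cutset characterisation of chordal graphs (via \cite{Lovasz} as in \cite{Droms}) to write $\Gamma=\Gamma_1\cup\Gamma_2$ with $\Gamma_1\cap\Gamma_2$ complete, and hence $G\cong G_1 *_{G_{12}} G_2$; it then needs the inductive hypothesis on \emph{both} $G_1$ and $G_2$. You instead use the equivalent simplicial-vertex characterisation of chordal graphs to get $G = G' *_{L} (G_v\times L)$ with $L=\G(N(v))$. This is in fact a special case of the paper's decomposition (take $\Gamma_1=\Gamma\setminus\{v\}$, $\Gamma_2=\{v\}\cup N(v)$), but it has the pleasant feature that one vertex group of the amalgam is already a direct product, so $K$ meets it and all its conjugates trivially and the inductive hypothesis is needed only once. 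Otherwise the two arguments coincide, including the appeal to \cite[Theorem~I.7.7]{DD} (or equivalently the Bass--Serre structure theorem for a subgroup meeting all edge groups trivially) to exhibit $K$ as a free product of a free group with conjugates of the free group $K'$.
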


From now on we shall use the notation $\K(G)$ for the kernel of that
natural map that appears in both theorems.

\begin{proofof}{Theorem~\ref{only_if}}
This proof closely follows and generalises the proof of 
\cite[Theorem 1]{SDS}. We have modified the notation and terminology in places.

We begin with a technical lemma.
\begin{lemma}
\label{H_lemma}
Suppose that $\Gamma$ is a graph, $\Delta$ 
an induced subgraph of $\Gamma$,
and that $G_v, v \in V(\Gamma)$, $H_v, v \in V(\Delta)$
are groups with $H_v \subseteq G_v$ for all $v \in V(\Delta)$.
Suppose that
$G=\G(\Gamma; G_v (v \in V(\Gamma))$ and $H=\G(\Delta;G_v (v \in V(\Delta))$.
Then 
\begin{mylist}
\item[(1)] $H$ embeds naturally in $G$
as the subgroup $\langle H_v: v \in V(\Delta) \rangle $, and
\item[(2)] $\K(H)=\K(G) \cap H$.
\end{mylist}
\end{lemma}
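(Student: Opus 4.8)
The plan is to first produce the natural homomorphism $\phi\colon H \to G$, prove that it is injective (this is assertion~(1)), and then obtain assertion~(2) by a short diagram chase. To construct $\phi$, recall that $H = \G(\Delta; H_v, v\in V(\Delta))$ is the quotient of the free product of the $H_v$ by the normal closure of the commutators $[h_u,h_v]$ with $h_u\in H_u$, $h_v\in H_v$ and $u\adj v$ in $\Delta$. Each $H_v$ includes into $G_v\le G$, and since $\Delta$ is induced, $u\adj v$ in $\Delta$ forces $u\adj v$ in $\Gamma$, so every such commutator is already trivial in $G$. The universal property then yields a homomorphism $\phi\colon H\to G$ restricting to the inclusion $H_v\hookrightarrow G_v$ on each vertex group, with image $\langle H_v : v\in V(\Delta)\rangle$.

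For assertion~(1), I would prove injectivity using Proposition~\ref{reduced_exp}. Take a nontrivial $h\in H$ with reduced expression $h_1\cdots h_k$ in $H$, where $h_i\in H_{v_i}$, $v_i\in V(\Delta)$ and $k\ge 1$; read as elements of the $G_{v_i}$, this is an expression for $\phi(h)$ in $G$. If $\phi(h)=1$, then its reduced $G$-expression is empty, so by Proposition~\ref{reduced_exp} the expression $h_1\cdots h_k$ can be carried to the empty expression inside $G$ by a succession of shuffles and vertex-group multiplications. The crucial point is that each such step is also legitimate in $H$ and returns the same element: no step introduces a vertex outside $V(\Delta)$, and because $\Delta$ is induced, $u\adj v$ holds in $\Delta$ exactly when it holds in $\Gamma$, so every shuffle available in $G$ is available in $H$; and a multiplication combines two syllables of a common $G_v$ that in fact lie in the subgroup $H_v$, whose product is computed identically in $H_v$ and $G_v$ (and is trivial in one exactly when trivial in the other). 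Mirroring the whole sequence inside $H$ would then reduce $h_1\cdots h_k$ to the empty expression, forcing $h=1$ and contradicting $k\ge 1$. Hence $\phi$ is injective, and I identify $H$ with its image $\langle H_v : v\in V(\Delta)\rangle$.

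For assertion~(2), let $\pi_G\colon G\to\prod_{v\in V(\Gamma)}G_v$ and $\pi_H\colon H\to\prod_{v\in V(\Delta)}H_v$ be the natural maps, so that $\K(G)=\ker\pi_G$ and $\K(H)=\ker\pi_H$. Introduce the injection $\iota\colon\prod_{v\in V(\Delta)}H_v\to\prod_{v\in V(\Gamma)}G_v$ sending each factor $H_v$ into $G_v$ and each coordinate outside $V(\Delta)$ to the identity. Evaluating on the generating vertex groups $H_v$ shows that the square $\pi_G\circ\phi=\iota\circ\pi_H$ commutes. Since $\iota$ is injective, $\ker(\iota\circ\pi_H)=\ker\pi_H=\K(H)$, whereas $\ker(\pi_G\circ\phi)=\phi^{-1}(\K(G))=H\cap\K(G)$ under the identification above; commutativity then gives $\K(H)=H\cap\K(G)$.

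I expect the one genuine obstacle to be the injectivity in assertion~(1): everything rests on verifying that a reduction of $\phi(h)$ to the identity in $G$ can be transported step-by-step into $H$. It is exactly here that the hypothesis that $\Delta$ be an \emph{induced} subgraph is indispensable, since otherwise a shuffle legal in $G$ could commute two syllables across an edge absent from $\Delta$, destroying the correspondence; the remainder of the argument is formal.
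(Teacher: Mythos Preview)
Your proof is correct. The paper handles this lemma in one line: it cites \cite[Proposition~3.31]{Green} for the embedding in~(1) and declares~(2) immediate. You instead give a self-contained argument, deriving injectivity directly from Proposition~\ref{reduced_exp} by transporting a $G$-reduction of $\phi(h)$ back into $H$ step by step, and spelling out~(2) as a commutative-square argument with the direct-product projections. Your route avoids the external reference and makes explicit exactly where the induced-subgraph hypothesis and the subgroup hypothesis $H_v\le G_v$ are used (for shuffles and for closure under vertex-group multiplication, respectively); the price is a longer write-up of what the paper is content to import wholesale from Green's thesis. Both arguments ultimately rest on the same normal-form machinery.
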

\begin{proof}
The natural embedding of $H$ in $G$ as a subgroup  follows from
\cite[Proposition 3.31]{Green}, and the rest is then immediate.
\end{proof}

Applying Lemma~\ref{H_lemma} in the case where $\Delta=\Gamma$ and
the $H_v$'s are cyclic subgroups of the $G_v$'s, we see that 
it is sufficient to prove the result of the theorem
in the case where the vertex groups are non-trivial cyclic.
In that case, the kernel is simply $G'$.

We choose generators $x_v$ ($v \in V(\Gamma)$) for the cyclic groups $G_v$
and suppose that $x_v$ has order $k_v \in \N \cup \{\infty\}$, with $k_v>1$.

The group $G$ has the presentation
$$\langle x_v,\;(v \in V(\Gamma))\mid x_v^{k_v},[x_v,x_u]\;(v \in
V(\Gamma),\,u \adj v) \rangle .$$
We define $X_G$ to be the associated presentation complex of $G$, which has a
single 0-cell,
a 1-cell for each $x_i$ and a 2-cell for each relator, attached along
the appropriate sequence of 1-cells. Each relator is either a
commutator of generators or a power of a generator, and we call the two kinds of
associated 2-cells {\em commutator} cells and {\em power} cells.

We write $K$ for $\K(G)=G'$
and define $X_{G/K}$ to be the presentation complex of $G/K$, using the
presentation
$$\langle x_v\;(v \in V(\Gamma)) \mid x_v^{k_v},[x_v,x_u]\;(v,u
\in V(\Gamma)) \rangle .$$
Clearly $X_G$ embeds in $X_{G/K}$.

We define $\tilde{X}_{G/K}$ to be the universal cover of $X_{G/K}$,
and then $Z$ to be the subcomplex of $\tilde{X}_{G/K}$ obtained from
it by deleting the lifts of all 2-cells of $X_{G/K}$ that correspond
to commutators not present in the presentation of $G$.  
Then $Z$ covers $X_G$ and has $K$ as its fundamental group.

Now let $w$ be a word in the generators $x_v$ representing the identity in $G$.
Then it is a consequence of Proposition~\ref{reduced_exp} (using an argument
similar to the proof of Proposition~\ref{geodesic_word}), that $w$ can be
transformed to the empty word $\epsilon$ by a combination of the
following moves:
\begin{mylist}
\item[M1] delete a subword $x_vx_v^{-1}$ or $x_v^{-1}x_v$,
\item[M2] delete a subword of the form $x_v^{k_v}$ or $x_v^{-k_v}$, 
\item[M3] replace a subword $x_v^{\pm 1}x_u^{\pm 1}$ by
$x_u^{\pm 1}x_v^{\pm 1}$, where $u \adj v$.
\end{mylist}

Since the 1-skeleton of $\tilde{X}_{G/K}$ can be identified with the Cayley graph of
$G/K$, and is also equal to the 1-skeleton of $Z$, 
the edges in the 1-skeleton of $Z$ can be oriented and
labelled by the generators $x_v$ of $G$.
Suppose that $p$ is a path in $Z$ that is homotopic to the trivial loop,
and let $w$ be the label of $p$.  Then $w$ represents the identity of $G$,
and there is a sequence of words
$w=w_1,w_2,\ldots,w_k=\epsilon$, such that each
$w_i$ transformed into $w_{i+1}$ by a move of type M1, M2 or M3,
and a corresponding sequence of loops 
\[ p=p_1\rightarrow p_2\rightarrow \cdots \rightarrow p_k,\]
with $p_k$ the trivial loop, where $p_i$ is labelled by $w_i$.
Then $p_{i+1}$ is contained within $p_i$ if the move has type M1 or M2,
or within $p_i \cup F$,
where $F$ is a face of $Z$ intersecting $p_i$ in 2 consecutive edges,
if the move has type M3.

So now if $Y$ is a subcomplex of $Z$ such that
\begin{mylist}
\item[(1)] $Y$ contains any power cell of $Z$ of which it contains
the 1-skeleton
and
\item[(2)] $Y$ contains any commutator cell of $Z$ (and its incident
vertices and edges) of which it contains two adjacent edges,
\end{mylist}
then the natural embedding of $Y$ in $Z$ induces an embedding of
$\pi_1(Y)$ as a subgroup of $\pi_1(Z)$.

Both conditions are satisfied by the following subcomplex $Y$.
Fix a vertex $v$ of $Z$. Then $Y$ consists of 
all the vertices and edges of $Z$ on
paths from $v$ labelled by words $x_{v_1}\cdots x_{v_k}$ with all $v_j$ distinct,
together with all 2-cells of $Z$ whose 1-skeletons are in $Y$. Intuitively
$Y$ is the intersection of the unit cube with $Z$; note that $Y$ contains no
power cells, and so this is the same subcomplex as in \cite{SDS}; hence
Condition (1) (which does not arise in \cite{SDS}) holds vacuously.

Just as in \cite{SDS} we observe that $Y$ is an orientable
surface of Euler characteristic $(4-n)2^{n-2}$ and hence genus $1-\frac{1}{2}
\chi(Y)= 1+(n-4)2^{n-3}$.
\end{proofof}


The remainder of this section is devoted to the proof of
Theorem~\ref{SDS:Thm2}.
The `only if' part of the theorem follows from Theorem
~\ref{only_if} and Lemma~\ref{H_lemma}.
Then the `if' part is an immediate consequence of the following result.
\begin{proposition}
\label{if}
Let $\Gamma$ be a finite graph that does not contain a cycle of length $4$  or
more as an induced subgraph, let $G_v$, for $v \in V(\Gamma)$,
be arbitrary groups, and let $G=\G(\Gamma;G_v, v \in V(\Gamma))$.
Then $\K(G)$ is free.
\end{proposition}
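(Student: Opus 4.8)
The plan is to argue by induction on the number $n$ of vertices of $\Gamma$, exploiting the fact that a finite graph with no induced cycle of length $4$ or more is precisely a \emph{chordal} graph, and that every nonempty finite chordal graph possesses a \emph{simplicial} vertex $v$, that is, one whose neighbourhood $N(v)$ induces a complete subgraph (Dirac's theorem). The base case $n\le 1$ is immediate, since then $\K(G)$ is trivial. For the inductive step the key point is that deleting a simplicial vertex keeps us inside the class of chordal graphs (induced subgraphs of chordal graphs are chordal) and at the same time yields a convenient splitting of $G$.

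Concretely, writing $\Gamma'$ for the induced subgraph on $V(\Gamma)\setminus\{v\}$ and $\mathrm{st}(v)=\{v\}\cup N(v)$, I would use the standard decomposition of a graph product along a vertex,
$$ G \;=\; \G(\Gamma') \ast_{\,\G(N(v))} \G(\mathrm{st}(v)), $$
an amalgamated free product with $A:=\G(\Gamma')$, $C:=\G(N(v))$ and $B:=\G(\mathrm{st}(v))$. Because $v$ is simplicial, both $N(v)$ and $\mathrm{st}(v)$ induce complete subgraphs, so $C$ and $B$ are \emph{direct products} of vertex groups; in particular $B\cong G_v\times C$. That this splitting is genuine, with $\G(N(v))$ sitting compatibly as a subgroup of both factors, follows from the presentation of $G$ together with the induced-subgraph embedding results of \cite{Green} already invoked in Lemma~\ref{H_lemma}.

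Now let $\phi\colon G\to G_{v_1}\times\cdots\times G_{v_n}$ be the natural map, set $K=\K(G)=\ker\phi$, and let $T$ be the Bass--Serre tree of the splitting $G=A\ast_C B$, on which $K$ acts by left multiplication with vertex stabilisers $K\cap gAg^{-1}$ and $K\cap gBg^{-1}$ and edge stabilisers $K\cap gCg^{-1}$. The crucial observation is that a graph product over a complete graph maps \emph{injectively} to its associated direct product, so $\K(B)=\K(C)=1$. Since $\phi$ is a homomorphism this forces $K\cap gBg^{-1}=K\cap gCg^{-1}=1$ for every $g\in G$: if $k=gcg^{-1}\in K$ with $c\in C$ (resp. $B$), then $\phi(c)=1$ and hence $c=1$. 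Thus $K$ acts with trivial edge stabilisers and trivial stabilisers at the $B$-vertices, while the same computation identifies the $A$-vertex stabiliser $K\cap gAg^{-1}$ with $g\,\K(A)\,g^{-1}\cong\K(A)$ (for $g=1$ this is exactly Lemma~\ref{H_lemma}(2)), which is free by the inductive hypothesis applied to the chordal graph $\Gamma'$.

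Finally, a group acting on a tree with trivial edge stabilisers is, by Bass--Serre theory, the free product of a free group with (conjugacy representatives of) its vertex stabilisers. Here every vertex stabiliser of $K$ is either trivial or a conjugate of the free group $\K(A)$, so $K$ is a free product of free groups and is therefore free. The main obstacle I anticipate is not the tree-theoretic bookkeeping but establishing the two structural inputs cleanly: the amalgamated-product splitting along the simplicial vertex, ensuring the amalgamating subgroup really is $\G(N(v))$ embedded compatibly in both factors, and the injectivity of $\phi$ on the clique subgroups $B$ and $C$, which is what collapses all but the $A$-type stabilisers.
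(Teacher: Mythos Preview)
Your proof is correct and follows the same overall strategy as the paper's --- induction on $|V(\Gamma)|$, an amalgamated-product splitting of $G$ with amalgam over a clique subgraph (so that $\K$ of the amalgam is trivial), and then Bass--Serre theory to conclude that $K$ is a free product of a free group with conjugates of free groups. The difference lies in which chordal decomposition is used. The paper invokes the clique-cutset property (from \cite{Lovasz}, via \cite{Droms}) to write a non-complete chordal $\Gamma$ as $\Gamma_1\cup\Gamma_2$ with $\Gamma_1\cap\Gamma_2$ complete, giving $G=G_1*_{G_{12}}G_2$ where \emph{both} factors are smaller and require the inductive hypothesis; the tree-theoretic step is then quoted as \cite[Theorem~I.7.7]{DD}. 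You instead peel off a single simplicial vertex $v$ (Dirac's theorem), so that one factor $B=\G(\mathrm{st}(v))$ is already a direct product with $\K(B)=1$ and only $A=\G(\Gamma')$ needs induction. Your route is marginally more streamlined --- the $B$-side contributes nothing, and your identification $K\cap gAg^{-1}=g\,\K(A)\,g^{-1}$ via normality of $K$ together with Lemma~\ref{H_lemma}(2) is exactly what the paper does --- while the paper's route has the minor advantage of tracking the original argument of \cite{SDS,Droms} verbatim. The Bass--Serre conclusion (trivial edge stabilisers imply a free product of vertex stabilisers with a free group) is identical in both.
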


The proof of Proposition~\ref{if} follows closely and generalises the proof of 
\cite[Theorem 2]{SDS}.

\begin{proofof}{Proposition~\ref{if}}
Write $K=\K(G)$.
If $\Gamma$ is the complete graph, then $G$ is the direct
product of the vertex groups and $K$ is the identity subgroup,
so the result holds.

The proof is by induction on the number $n$ of vertices, the case $n=1$ being covered by the above.

For $n>1$ and $\Gamma$ not complete, it follows from
\cite[Solution to Problem 9.29b]{Lovasz} (as explained in the proof of the lemma in \cite{Droms}) that $\Gamma$ can be
written as the union of proper subgraphs $\Gamma_1$ and $\Gamma_2$,
whose intersection $\Gamma_{12}$ is either complete,
or empty (if $\Gamma$ is disconnected).
Then $G\cong G_1 *_{G_{12}} G_2$,
where $G_1 = \G(\Gamma_1;G_v, v \in V(\Gamma_1))$,
$G_2 = \G(\Gamma_2;G_v, v \in V(\Gamma_2))$, and
$G_{12} = \G(\Gamma_{12};G_v, v \in V(\Gamma_{12}))$.
Now by Lemma~\ref{H_lemma}, $\K(G_1)=K \cap G_1$, $\K(G_2)=K \cap G_2$
and $\K(G_{12})=K \cap G_{12}$.
By the induction hypothesis, $\K(G_1)$ and $\K(G_2)$ are free and,
since $\Gamma_{12}$ is complete,
$\K(G_{12})$ is the identity subgroup, and hence so is $K \cap G_{12}$.
Now, as an amalgamated free product, $G$ is the fundamental group of a
graph of groups (as defined in \cite{DD}), for the graph consisting of
a single edge, with vertex groups $G_1$, $G_2$ and edge group
$G_{12}$. Since $K \cap G_{12}$ and hence also (by the normality of $K$)
the intersections of $K$ with all conjugates of $G_{12}$ are trivial,
we can apply \cite[Theorem I.7.7]{DD}
to deduce that $K$ is a free product of a free group $F$ with
subgroups of conjugates of the vertex groups $\K(G_1)$ and $\K(G_2)$. So $K$ is free
as claimed.
\end{proofof}

\section{Generalising Hermiller and  \u{S}uni\'{c}}

\begin{theorem}\label{Thm:HS}
Suppose that $\Gamma$ is a graph with finite chromatic number $\ch(\Gamma)$, and
$G=\G(\Gamma; G_v, v \in V(\Gamma))$ a graph of groups.
Then there is a series
\[ G = G_0 \rhd G_1 \rhd \cdots \rhd G_{\ch(\Gamma)} = 1,\] 
such that each factor $G_{i-1}/G_i$ is the free product of copies of 
vertex groups $G_v$.
\end{theorem}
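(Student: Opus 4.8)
The plan is to deduce the theorem from a single structural lemma about killing one colour class, after organising the whole series around a proper colouring of $\Gamma$. First I would fix a proper colouring with $\ch(\Gamma)$ colours, giving colour classes $C_1,\dots,C_{\ch(\Gamma)}$, each an independent set. For $0 \le i \le \ch(\Gamma)$ write $W_i = C_1 \cup \cdots \cup C_i$, and let $r_i \colon G \to \G(\Gamma[W_i])$ be the retraction that is the identity on each $G_v$ with $v \in W_i$ and sends every other vertex group to $1$; this is well defined since each relator of $G$ maps to a relator or to the identity. Setting $G_i = \ker r_i$ gives subgroups, all normal in $G$, with $G_0 = G$ and $G_{\ch(\Gamma)} = 1$; since $r_{i-1}$ factors through $r_i$ we have $G_i \subseteq G_{i-1}$, so this is a normal series. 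As $r_i$ is onto, $G/G_i \cong \G(\Gamma[W_i])$, and $G_{i-1}/G_i$ is identified with the kernel of the induced retraction $\G(\Gamma[W_i]) \to \G(\Gamma[W_{i-1}])$, that is, the retraction killing exactly the vertex groups of the independent set $C_i$. Thus the theorem reduces to the following claim.

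\emph{Claim:} if $\Delta$ is a finite graph, $C \subseteq V(\Delta)$ is independent and $W = V(\Delta) \setminus C$, then the kernel of the retraction $\G(\Delta) \to \G(\Delta[W])$ is a free product of copies of the vertex groups $G_v$, $v \in C$. To prove this I would realise $\G(\Delta)$ as the fundamental group of a star-shaped graph of groups: one central vertex carrying $\G(\Delta[W])$, and for each $v \in C$ a leaf carrying $G_v \times \G(\mathrm{lk}(v))$ joined to the centre by an edge carrying $\G(\mathrm{lk}(v))$. The only place independence of $C$ is used is that $\mathrm{lk}(v) \subseteq W$ for every $v \in C$ (so that, by Lemma~\ref{H_lemma}, each edge group embeds in the central group) and that distinct vertices of $C$ are non-adjacent; comparing presentations then shows that the fundamental group of this graph of groups is exactly $\G(\Delta)$. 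Writing $N$ for the kernel in question and letting $N$ act on the Bass--Serre tree $T$, normality of $N$ together with the fact that the retraction is the identity on $\G(\Delta[W])$, hence on every edge group and on the central vertex group, shows that $N$ meets every edge stabiliser and every central vertex stabiliser trivially, while $N \cap g(G_v \times \G(\mathrm{lk}(v)))g^{-1} = gG_vg^{-1}$ at a leaf. By Bass--Serre theory (as in \cite[Theorem I.7.7]{DD}), $N$ is therefore the free product of these conjugates of the $G_v$ with a free group $F \cong \pi_1(T/N)$.

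The main obstacle is showing that the free part $F$ vanishes, and I expect to handle this not by a delicate normal-form computation but simply by examining the quotient graph $T/N$. Since $G/N \cong \G(\Delta[W])$ acts on $T/N$ with the central vertex group surjecting onto all of $G/N$, there is a single central vertex; and since each leaf vertex group has the same image in $G/N$ as its incident edge group, every leaf of $T/N$ has degree one and is joined to that central vertex. Hence $T/N$ is itself a star, in particular a tree, so $F = \pi_1(T/N)$ is trivial and $N$ is a free product of copies of the $G_v$, as claimed. Feeding this back through the reduction above identifies every factor $G_{i-1}/G_i$ with a free product of copies of vertex groups and so yields the required series.
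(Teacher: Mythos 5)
Your proof is correct, but it takes a genuinely different route from the paper. Both arguments rest on the same structural fact --- that the kernel of the retraction killing one independent colour class $C$ is a free product of copies of the groups $G_v$, $v \in C$ --- and the resulting series is essentially the same one: the paper builds it recursively, splitting off a single colour class as $G \cong H \rtimes G_L$ and inducting on $\ch(\Gamma)$, whereas you define all the terms at once as kernels of retractions onto the subgraphs $\Gamma[W_i]$, which in addition makes every $G_i$ normal in $G$ rather than merely subnormal. The difference lies in how the key fact is proved. The paper is explicit and elementary: it constructs $H$ abstractly as a free product of copies $G_{d,t}$ indexed by the sets $T_d \subseteq G_L$ of elements admitting no reduced expression beginning adjacent to $d$, defines the $G_L$-action on these indices by normal-form combinatorics (via Proposition~\ref{reduced_exp}), and verifies $H \rtimes G_L \cong G$ by Tietze transformations on presentations. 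You instead decompose $\G(\Delta)$ as a star of groups with centre $\G(\Delta[W])$, leaves $G_v \times \G(\mathrm{lk}(v))$ and edge groups $\G(\mathrm{lk}(v))$, and invoke the Bass--Serre structure theorem \cite[Theorem I.7.7]{DD} --- the very result the paper itself uses to prove Proposition~\ref{if} --- together with your observation that the quotient graph $T/N$ is a star, hence a tree, to kill the free part; that last step is the crucial one and your argument for it (the central vertex group surjects onto $G/N$, and each leaf orbit meets exactly one edge orbit because $G_v \le N$) is sound. Your route is shorter and more conceptual, and it identifies the free factors as indexed by the cosets $\G(\Delta[W])/\G(\mathrm{lk}(v))$; the paper's route avoids Bass--Serre machinery entirely and exhibits the permutation action on the free factors explicitly. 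One small repair: you state your key claim only for finite $\Delta$, but the theorem allows $\Gamma$ to be infinite provided $\ch(\Gamma)$ is finite; nothing in your argument actually uses finiteness (stars of groups with infinitely many leaves and their Bass--Serre theory are unproblematic), so you should simply drop that hypothesis.
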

\begin{proof}
The proof is based on the proof of \cite[Theorem A]{HS}, 
which deals with the special case where $G$ is a right-angled Artin group.
We use induction
on $\ch(\Gamma)$. If $\ch(\Gamma)=1$ then there are no edges, and so $G$ is
a free product of the vertex groups $G_v$.

Now suppose that $\ch(\Gamma)>1$. Then given a colouring of the graph with
$\ch(\Gamma)$ colours, we define the subset $D$ of the vertices to be
all those of one particular colour, and put $L=V(\Gamma)\setminus D$.
(This follows the notation of \cite{HS}, where $D$ and $L$ are
referred to as the {\em dead} and {\em living} vertices.)
Let $\Gamma_L$ be the induced subgraph with vertex set $L$,
and define $G_L:=\G(\Gamma_L;G_v, v \in L)$.
For each vertex $v$, we let $\langle X_v \mid S_v\rangle$
be a presentation for $G_v$; and we let $\langle X_L \mid S_L \rangle$ be a
presentation for $G_L$, where $X_L = \cup_{l \in L}X_l$ and $S_L$ is the
union of $S_l\ (l \in L)$ and the commutator relations derived from $\Gamma_L$.

The result holds for $G_L$ by the induction hypothesis.
Hence we can prove the result by showing that $G$ is isomorphic
to a split extension of the form $H \rtimes G_L$, where $H$ is a free 
product of vertex groups.

We define $H:= *_{d \in D} H_d $ to be the free product of groups $H_d$,
for $d \in D$, where the $H_d$ are defined as follows.
For each $d \in D$, we define $T_d$ to be the set of all 
those elements $t \in G_L$ for which there is no
reduced expression starting with an element of $G_v$ with $v  \adj d$;
note that $1 \in T_d$.
Then we define $H_d := *_{t \in T_d} G_{d,t} $ to be a free product of copies
$G_{d,t}$ of $G_d$, which are indexed by the elements of $T_d$.

For each $d \in D$, we shall now define an action of $G_L$ on $T_d$. This
induces an action of $G_L$ on $H_d$ by permuting the free factors, and this
in turn induces an action of $G_L$ on $H$, which will be used to define
the semidirect product $H \rtimes G_L$.

We start by defining an action of each $G_l$ on $T_d$.
For each element $a$ of some $G_l$ ($l \in L$),
we define a map $\alpha_{d,a}: T_d \rightarrow T_d$, by:
\[ t\alpha_{d,a}:=\left\{ \begin{array}{rl}
               ta, &  ta \in T_d \\
               t, &  ta \not \in T_d\end{array} \right.\]
Using (without modification) the argument in the proof of \cite[Theorem A]{HS},
we find that
\begin{quote}
$(*)$ if $t \in T_d$ and $1 \neq a \in G_l$, then $ta \not \in T_d$ if and only
if $d\adj l$
and all the syllables in any reduced expression for $t$ are from vertex
groups $G_v$ with $v \adj l$.
\end{quote}
Hence if $a,b$ are non-identity elements of the same vertex group $G_l$, and
$t \in T_d$, then either $t$ is fixed by both $\alpha_{d,a}$ and $\alpha_{d,b}$ or $t$ is
fixed by neither of $\alpha_{d,a}$ and $\alpha_{d,b}$.
It follows from this observation that for any $a,b \in G_l$ (including the
identity) $\alpha_{d,a}\alpha_{d,b} = \alpha_{d,ab}$.
So for each vertex group $G_l$ with $l \in L$, $a \mapsto \alpha_{d,a}$
defines an action of $G_l$ on $T_d$.

Exactly as in the proof of \cite[Theorem A]{HS}
we that, if $a\in G_l$ and $b \in G_m$ with $l \adj m$,
then $\alpha_{d,a}\alpha_{d,b}=\alpha_{d,b}\alpha_{d,a}$.

Hence we can extend our actions of the vertex groups of $L$ on $T_d$ to 
an action of $G_L$ on $T_d$ by
defining $\alpha_{d,a_1\ldots a_k}$ to be the composite $\alpha_{d,a_1}\cdots
\alpha_{d,a_k}$, where each $a_i$ is in some vertex group.

We also observe that
\begin{quote}
$(**)$ if $t$ is in the subset $T_d \subseteq G_L$ and
$\tau$ is a reduced expression
for $t$, then every prefix of $\tau$ is also a reduced expression
for an element of $T_d$, and hence $1\alpha_{d,t} = t.$
\end{quote}

Now for $t \neq 1$ we let $\theta_{d,t}$ be an isomorphism from
$G_{d,1}$ to $G_{d,t}$, and we identify $G_{d,1}$ with $G_d$.
Then, for each $x \in G_L$,
we define an isomorphism $\beta_x$ of $H$ that permutes the free factors
$G_{d,t}$ of each of the free factors $H_d$ of $H$
by defining, for  $g \in G_{d,t}$: 
\[ g\beta_x:= g\theta_{d,t}^{-1}\theta_{d,t\alpha_{d,x}}.\]
Hence $x \mapsto \beta_x$ defines an action of $G_L$ on $H$,
and we define the associated semidirect product $H \rtimes G_L$.


Our proof is complete once we have shown that $H \rtimes G_L$
is isomorphic to $G$.

For each $d \in D$, we define $X_{d,1} := X_d$ to be our given
generating set of $G_{d,1} = G_d$ and, for each $1 \neq t \in T_d$, let
$X_{d,t} = \{ x \theta_{d,t} \mid \ x \in X_d \}$ be the associated
generating set of $G_{d,t}$. Then
\begin{eqnarray*}
 H \rtimes G_L &=& \langle X_L \cup \bigcup_{d \in D}\bigcup_{t \in T_d}X_{d,t}
\mid S_L \cup \bigcup_{d \in D}( S_d \cup R_d) \rangle\\
\hbox{\rm where}\quad R_d &=& \bigcup_{t \in T_d} \{ g^x = g\beta_x:
g \in X_{d,t},x \in X_L\}.\end{eqnarray*}
For each $t$ in $\bigcup_{d \in D}T_d$,
choose a word $\rho(t)$ over $X_L$ that represents $t$.
We deduce from our observation $(**)$ above that, for $g \in G_{d,1}$ and
$t \in T_d$, we have $g\beta_t = g \theta_{d,t}$ and hence
(by the definition of the semidirect product using the action defined by
$\beta_t$) $g^{\rho(t)} = g\theta_{d,t}$.
We use these expressions to eliminate from the presentation all the generators
in the sets $X_{d,t}$, for each $d\in D$ and $1 \neq t \in T_d$.
Let $g^x=g\beta_x$ be a relator from $R_d$ with $g \in X_{d,t}$.
Then, if $t \neq 1$, $g$ rewrites as $g_1^{\rho(t)}$,
where $g_1=g\theta_{d,t}^{-1} \in X_d$, and we put $g_1=g$ if $t=1$.
So $g^x$ rewrites (or remains when $t=1$) as $g_1^{\rho(t)x}$,
while $g\beta_x$ becomes $g_1^{\rho(tx)}$ when $tx \in T_d$ or
$g_1^{\rho(t)}$ when $tx \not\in T_d$.  In the first case,
$\rho(tx) =_{G_L} \rho(t)x$, and the relation is a consequence of relations
of $G_L$, and so can be omitted.

So our presentation simplifies to
\begin{eqnarray*}
 H \rtimes G_L &=& \langle X_L \cup \bigcup_{d \in D}X_d
     \mid S_L \cup \bigcup_{d \in D}( S_d \cup R'_d) \rangle\\
\hbox{\rm where}\quad R'_d &=& 
 \{ g^{\rho(t)x} = g^{\rho(t)} : g \in X_d,t \in T_d,x \in X_L, tx \not\in T_d \}.\end{eqnarray*}
Now we know from $(*)$ that whenever $g,x,t$ satisfy the conditions in the
definition of $R'_d$, then $x$ and $t$ must commute, and $x \in X_l$ with
$l \adj d$.  Since $x,t \in G_L$,
$xt=tx$ must be a consequence of the relations in $S_L$, and hence the
corresponding relation $g^{\rho(t)x}=g^{\rho(t)}$ can be replaced by $g^x=g$.
So we can replace $R'_d$ by
\[ R''_d = 
 \{ g^x = g : g \in X_d, x \in X_l, l \adj d \}.\]
that is by the set of commutator relations implied by those edges
of $\Gamma$ between vertices in $L$ and vertices in $D$,
and we see that
 \[ H \rtimes G_L = \langle X_L \cup \bigcup_{d \in D}X_d \mid S_L \cup \bigcup_{d \in D}( S_d
\cup R''_d) \rangle,\] which
we recognise as a presentation for $\G(\Gamma)$.
\end{proof}

The following immediate consequence is already known \cite{Chiswell}.
\begin{corollary}
If $G_v, v \in V(\Gamma)$ are right orderable
groups, then for any graph $\Gamma$ with finite chromatic number,
the graph group $\G(\Gamma; G_v, v \in V(\Gamma))$ is right orderable.
\end{corollary}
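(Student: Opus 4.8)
The plan is to feed the series produced by Theorem~\ref{Thm:HS} into two standard closure properties of the class of right orderable groups, and then induct on the length of the series. The two facts I need are these. (a) The class of right orderable groups is closed under extensions: if $N \rhd G$ with both $N$ and $G/N$ right orderable, then $G$ is right orderable. (b) A free product of right orderable groups is right orderable; this is a classical theorem, going back to Vinogradov.

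Property (a) is elementary, and I would record it by the usual lexicographic construction. Writing $Q = G/N$ and fixing positive cones $P_N \subseteq N$, $P_Q \subseteq Q$ for the given right orders, one defines a subset of $G$ by declaring $g$ to be positive precisely when either its image $\bar g$ is positive in $Q$, or $\bar g = 1$ (so that $g \in N$) and $g$ is positive in $N$. A short check shows this set is closed under multiplication and partitions $G \setminus \{1\}$ into a cone and its inverse, which is exactly the data of a right order on $G$. For (b) I would simply cite the classical result; it is the only genuinely nontrivial input and is not a formal manipulation.

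With these in hand the corollary is immediate. By Theorem~\ref{Thm:HS} there is a series
\[ G = G_0 \rhd G_1 \rhd \cdots \rhd G_{\ch(\Gamma)} = 1 \]
in which every factor $G_{i-1}/G_i$ is a free product of copies of the vertex groups $G_v$. Since each $G_v$ is right orderable by hypothesis, property (b) shows that each factor $G_{i-1}/G_i$ is right orderable. I would then induct downward on $i$: the bottom term $G_{\ch(\Gamma)} = 1$ is trivially right orderable, and if $G_i$ is right orderable then $G_{i-1}$ is an extension of the right orderable group $G_{i-1}/G_i$ by the right orderable group $G_i$, so (a) makes $G_{i-1}$ right orderable. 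After the finitely many steps permitted by $\ch(\Gamma) < \infty$ we reach $G = G_0$, which is therefore right orderable. There is no real obstacle here beyond the reliance on (b); since the series has finite length, no question of transfinite iteration or of ordering infinitely many free factors arises that is not already handled by that theorem.
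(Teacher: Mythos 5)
Your proposal is correct and follows exactly the paper's own argument: apply Theorem~\ref{Thm:HS} to obtain the series and then use closure of right orderable groups under free products and extensions, inducting up the (finite) series. The paper states this in two lines; you have merely supplied the standard details (the lexicographic order for extensions and the classical Vinogradov theorem for free products), which is entirely consistent with its proof.
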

\begin{proof} Let $G=\G(\Gamma)$.
We find a series 
\[ G = G_0 \rhd G_1 \rhd \cdots \rhd G_{\ch(\Gamma)} = 1\] 
for $G$, where each factor $G_{i-1}/G_i$ is a free product of groups $G_v$.
The result now follows from the fact that the set of right orderable groups is
closed under free products and extensions.
\end{proof}

\section{When is a graph product of groups hyperbolic?}
In this section, we provide a alternative proof of the following
theorem, which is the main result of~\cite{meier}. Our proof makes use of
the result proved in~\cite{papasoglu} that a group is (word-)hyperbolic if
and only if geodesic bigons in its Cayley graph are uniformly thin, and
is otherwise elementary.

\begin{theorem}
\label{gphyperbolic}
Let $G=\G(\Gamma;G_v, v \in V(\Gamma))$ be a graph product of non-trivial
groups with $V(\Gamma) = \{v_1,\ldots,v_n\}$ finite.
Then $G$ is hyperbolic if and only if all the following conditions hold.
\begin{mylist}
\item[(i)] Each vertex group $G_v$ is hyperbolic.
\item[(ii)] No two vertices of $\Gamma$ with infinite vertex groups are adjacent.
\item[(iii)] If $G_v$ is infinite for some vertex $v$, then any two vertices
that are adjacent to $v$ are adjacent to each other.
\item[(iv)] $\Gamma$ has no cycle of length 4 as an induced subgraph.
\end{mylist}
\end{theorem}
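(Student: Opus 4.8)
The plan is to prove both directions by relating hyperbolicity of $G$ to the thinness of geodesic bigons, using the criterion from \cite{papasoglu}. The necessity of conditions (i)--(iv) is the easier direction: each condition, if it fails, produces an isometrically (or quasi-isometrically) embedded subgroup that is not hyperbolic. For (i), a vertex group $G_v$ embeds as a retract (by Lemma~\ref{H_lemma} applied with $\Delta$ the single vertex $v$), so $G_v$ must be hyperbolic. For (ii), if $u \adj v$ with $G_u, G_v$ both infinite, then $\langle G_u, G_v\rangle = G_u \times G_v$ contains a subgroup isomorphic to $\Z \times \Z$, which is not hyperbolic; again Lemma~\ref{H_lemma} gives the embedding. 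For (iv), if $\Gamma$ contains an induced $4$-cycle, the corresponding graph product contains $F \times F$ or at least a $\Z^2$-like flat coming from two commuting infinite-order elements supported on opposite pairs of the cycle, violating hyperbolicity. Condition (iii) is the subtlest of the four: if $v$ has infinite vertex group and two non-adjacent neighbours $u, w$, then $\langle G_u, G_w\rangle$ is a free product while $G_v$ centralises both, producing an undistorted flat of the form $(\text{infinite order element}) \times F_2$ or similar; I would exhibit an explicit $\Z \times F$ subgroup and invoke Lemma~\ref{H_lemma}.

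For sufficiency, I assume (i)--(iv) and aim to show geodesic bigons in the Cayley graph of $G$ are uniformly thin. First I would fix generating sets $X_v$ for each $G_v$ with $X = \cup_v X_v$, and appeal to Proposition~\ref{geodesic_word} to understand geodesic words: a word is geodesic exactly when no sequence of shuffles exposes a reducible subword over some $X_v$. The key structural consequence of conditions (ii) and (iii) is that infinite vertex groups are ``isolated'': condition (ii) says no two infinite vertex groups are adjacent, and (iii) says the link of any infinite-group vertex is a clique. Together these restrict how long syllables from infinite vertex groups can interact, which is what controls the geometry. I would then analyse two geodesic words $w_1, w_2$ representing the same element and bound the distance between corresponding points; the normal form of Proposition~\ref{reduced_exp} tells me $w_1$ and $w_2$ differ only by shuffles together with rewriting inside individual vertex groups, so the two geodesics track each other syllable-block by syllable-block.

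The main obstacle is the sufficiency direction, and within it the treatment of the vertex groups themselves: since each $G_v$ is only assumed hyperbolic (not, say, finite), a geodesic bigon in $G$ may pass through a long geodesic segment inside a single coset of some $G_v$, and I must import the bigon-thinness constant of $G_v$ and show these local thinness bounds assemble into a global one. The heart of the argument is a case analysis showing that any ``fat'' geodesic bigon in $G$ would force either a long commuting interaction (ruled out by (ii) and (iii)) or an induced $4$-cycle's worth of independent commuting syllables (ruled out by (iv)). Concretely, I would argue that if a bigon were not thin, then by tracking the syllable structure along both sides one locates four syllables whose support vertices form an induced $4$-cycle, or two adjacent infinite vertex groups, or a violation of (iii); the combinatorics of the shuffle equivalence (Proposition~\ref{reduced_exp}) is what converts a geometric failure of thinness into one of these forbidden configurations. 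I expect the bookkeeping that uniformises the constant across all these cases, and across the internal hyperbolicity constants of the $G_v$, to be the most delicate and technical part of the proof.
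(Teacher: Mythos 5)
Your high-level strategy is the same as the paper's: necessity via embedded $\Z^2$ subgroups together with hyperbolicity of the vertex groups, and sufficiency via Papasoglu's criterion that uniform thinness of geodesic bigons implies hyperbolicity. The necessity half of your proposal is essentially complete and correct (your retract argument for (i) is a mild variant of the paper's observation, via Proposition~\ref{geodesic_word}, that geodesics in $\mathcal{C}(G_v,X_v)$ stay geodesic in $\mathcal{C}(G,X)$).

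The genuine gap is in the sufficiency direction, which is the substance of the theorem: you state the target (a fat bigon forces a forbidden configuration) but supply no mechanism for reaching it. Two concrete ingredients are missing. First, your guiding heuristic --- that two geodesic words for the same element ``track each other syllable-block by syllable-block'' because they differ only by shuffles and vertex-group rewrites --- is false as stated: shuffles are precisely the moves that allow corresponding points to drift apart. In $\Z \times \Z$ the geodesics $a^n b^n$ and $b^n a^n$ differ only by shuffles, yet their midpoints are at distance $2n$; the entire content of the proof is a quantitative bound on this drift under hypotheses (ii)--(iv), and your proposal contains no such bound. Second, the paper's argument (Proposition~\ref{prop:gphyp}) runs an induction on the length of the geodesics, and the engine driving it is Lemma~\ref{lem:nongeo}: if $\alpha$ is geodesic but $\alpha x$ is not, for $x \in X_v$, then $\alpha$ has a suffix $\beta$ whose first letter lies in $X_v$ and all of whose other letters lie in $X_v$ or in $X_w$ with $w \adj v$. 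It is this lemma, applied repeatedly to the two sides of the bigon (specifically to the maximal suffixes of letters from pairwise-commuting vertex groups), that converts failure of thinness into adjacency constraints from which one extracts either an induced $4$-cycle or a violation of (ii)/(iii); a companion lemma (Lemma~\ref{lem:shuffle_ft}) bounding fellow-travelling under shuffles, and a separate reduction of bigons with an endpoint at the midpoint of an edge to bigons with vertex endpoints (required because Papasoglu's criterion concerns all geodesic bigons, not just those ending at vertices), are also needed and absent from your sketch. What you defer as ``delicate bookkeeping'' is not bookkeeping but the proof itself.
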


\begin{proof}
For each vertex $v$ of $\Gamma$ we choose a finite symmetric generating set
$X_v$ of $G_v$, equal to $G_v \setminus \{1\}$ if $G_v$ is finite.
Then we let $X=\cup_{v \in V(\Gamma)} X_v$.
We denote by $v(x)$ the vertex of $\Gamma$ such that $x \in X_{v(x)}$.

Suppose first that $G$ is hyperbolic.
Then geodesic triangles in the Cayley graph $\C{G}{X}$ are uniformly thin.
It follows from
Proposition~\ref{geodesic_word} that, for any $v \in X_v$, words labelling
geodesics within $\C{G_v}{X_v}$ remain
geodesic within $\C{G}{X}$ and so geodesic triangles within
$\C{G_v}{X_v}$ are also uniformly thin. Hence each $G_v$ is hyperbolic,
that is Condition (i) holds. 
Just as in the proof in~\cite{meier}, we see easily
that if any of the Conditions (ii)--(iv) fails, then $G$ contains a subgroup
isomorphic to $\Z^2$, and so cannot be hyperbolic.

This completes the `only if' part of the result. The `if' part is a consequence 
of the following result together with the main result of \cite{papasoglu}.
\end{proof}

\begin{proposition}
\label{prop:gphyp}
Let $G = \G(\Gamma,G_v, v \in V(\Gamma))$ be a graph product
as in Theorem~\ref{gphyperbolic} and such that
the Conditions (i)--(iv) are satisfied.
Then geodesic bigons in $\C{G}{X}$ are uniformly thin.
\end{proposition}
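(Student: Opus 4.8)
The plan is to exploit the syllable structure of geodesics together with Papasoglu's criterion. First I would pin down what geodesic words look like in $\C{G}{X}$: using Proposition~\ref{reduced_exp} and the argument of Proposition~\ref{geodesic_word}, a word over $X$ is geodesic precisely when each maximal syllable (a subword over a single $X_v$) is geodesic in $\C{G_v}{X_v}$ and the underlying expression is reduced. Since $X_v = G_v\setminus\{1\}$ when $G_v$ is finite, every finite-vertex syllable is a single letter. Two geodesics for the same element $g$ therefore carry the \emph{same} multiset of syllables (as vertex-labelled group elements), the two orderings differing only by shuffles, that is, by transposing commuting (adjacent-vertex) syllables. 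This lets me encode a geodesic as a linear extension of a fixed partial order (a ``heap'') on the syllables of $g$, in which two syllables are incomparable exactly when their vertices are distinct and adjacent; a point on the geodesic then corresponds to an initial downset $S$ of this poset, together with a geodesic prefix of one further syllable when the arc-length cut falls inside a syllable.

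Next I would compare two geodesics $p,q$ for $g$ directly. Given downsets $S_1$ (from $p$) and $S_2$ (from $q$) of equal total length $t$, a short computation shows that every syllable of $S_1\setminus S_2$ is poset-incomparable to every syllable of $S_2\setminus S_1$, so their vertex sets $U$ and $W$ are disjoint and joined completely by edges. Hence $\gamma_1:=\prod(S_1\setminus S_2)$ and $\gamma_2:=\prod(S_2\setminus S_1)$ commute, and because their supports are completely joined one gets $d(\prod S_1,\prod S_2)=|\gamma_1|+|\gamma_2|=2L$, where $L:=\mathrm{len}(S_1\setminus S_2)=\mathrm{len}(S_2\setminus S_1)$. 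The whole problem reduces to bounding $L$ by a constant, and the key lemma I would prove is $L\le 2n$ with $n=|V(\Gamma)|$. Here Conditions (ii)--(iv) enter. By (iv) at least one of $U,W$ — say $W$ — is a clique (otherwise a non-adjacent pair on each side would produce an induced $4$-cycle); within the clique $W$, reducedness forces at most one syllable per vertex, so $|S_2\setminus S_1|\le n$. By (ii) the clique $W$ has at most one infinite vertex $w_0$; if its syllable $h$ were long, then, since everything in $S_1\setminus S_2$ commutes with $w_0$, Condition (iii) would confine $S_1\setminus S_2$ to the finite, completely-joined set of neighbours of $w_0$, giving $L\le n<|h|$, a contradiction, so $h$ is short as well. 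This bounds $L$.

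Finally I would assemble the fellow-traveller estimate. The infinite-vertex syllables are pairwise non-adjacent by (ii), hence pairwise comparable, so they form a chain occurring in the \emph{same} order in $p$ and $q$, and the lemma shows the position of each agrees in $p$ and $q$ up to an additive constant. For a point $P$ on $p$ I take the point $Q$ on $q$ at the same arc-length. If $P$ lies inside an infinite-vertex syllable $h$, I match it to the point of $q$ that has traversed the same fraction of $h$: the reordered material on either side is the commuting element above, bounded by the lemma, while the two possibly different geodesic spellings of $h$ inside $G_{w_0}$ fellow-travel by a constant because $G_{w_0}$ is hyperbolic (Condition (i)). Otherwise $P$ lies in a ``finite region'', and I compare the two (near-)equal-length downsets via the lemma, the leftover partial letters contributing at most $1$ each. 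In every case $d(P,Q)$ is bounded by a constant depending only on $n$ and the hyperbolicity constants of the finitely many vertex groups, which is exactly uniform thinness of geodesic bigons.

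The main obstacle is the bookkeeping around \emph{partial} syllables when the arc-length cut falls inside a long infinite-vertex syllable: there one must align the two distinct geodesic spellings of that syllable using Condition (i) while simultaneously controlling the reordered finite material on either side, and check that the two contributions genuinely commute so that their lengths add rather than interact. Extracting a single clean constant from the interplay between the chain of pinned infinite syllables and the bounded ``floating'' finite syllables is where the argument needs the most care; the rest is the elementary heap-of-pieces combinatorics described above.
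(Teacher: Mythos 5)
Your overall route (encode the two geodesics by heap combinatorics, compare the downsets at equal arc length via a complete-join lemma, bound the symmetric difference using an induced-4-cycle argument, then invoke Papasoglu) is genuinely different from the paper's proof, which is an induction on $|\alpha_1|$ that peels off maximal commuting suffixes using two local lemmas about shuffles and non-geodesic extensions. However, your argument has a genuine gap at its foundation. The characterization of geodesics you begin with is false, even under Conditions (i)--(iv): it is not true that a word over $X$ is geodesic precisely when each maximal syllable is geodesic in its vertex group and the underlying expression is reduced. Take $\Gamma$ to be a single edge $\{u,a\}$ with $G_u=\langle x\rangle$ infinite cyclic and $G_a=\langle c\rangle$ of order $2$ (all four conditions hold). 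The word $xcx$ is geodesic for $x^2c$ (its length $3$ equals $|x^2|+|c|$), but its maximal-syllable expression $(x)(c)(x)$ is not reduced. Consequently, two geodesics for the same element need not carry the same multiset of maximal syllables ($xcx$ versus $xxc$); a geodesic need not be a linear extension of the heap of syllables of $g$, because a syllable from an infinite vertex group can be split into several pieces interleaved with letters from adjacent vertex groups; and a point on a geodesic is not obviously ``an initial downset plus a geodesic prefix of one further syllable'' --- the pre-cut letters of a split syllable are scattered, and a priori more than one syllable could be split by the same cut. Everything downstream (the identification of the two points at arc length $t$ with downsets $S_1,S_2$, the computation $d(\prod S_1,\prod S_2)=2L$, and the reduction of the whole problem to bounding $L$) rests on this encoding, so the proof as written does not stand.

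The gap is repairable, but only with real work that your proposal defers to ``bookkeeping'': one must prove that under Condition (ii) at most one syllable (necessarily at an infinite vertex) can be split at any given cut --- split syllables pairwise interleave, hence lie at pairwise adjacent distinct vertices, and finite-vertex syllables are single letters since $X_v=G_v\setminus\{1\}$ --- and that by Condition (iii) the material interleaved within a split infinite-vertex syllable consists of at most one letter from each neighbouring (finite, pairwise adjacent) vertex, so that the downset-plus-partial-syllable picture can be restored at the level of group elements. Your key length lemma ($L\le 2n$ via the clique/4-cycle dichotomy and Conditions (ii), (iii)) is sound and closely parallels the final case of the paper's analysis, where an induced 4-cycle is also extracted to reach a contradiction; it is the structural model of geodesics feeding into that lemma that must be rebuilt. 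Note that the paper sidesteps this difficulty entirely: it never claims any global normal form for geodesic words, extracting only local information about suffixes via Lemma~\ref{lem:nongeo} and letting the induction on word length do the rest.
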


Before embarking on the proof of this proposition, we prove two technical
lemmas, which will be used repeatedly.

We first clarify our notation.
We let $n=|V(\Gamma)|$.
If $\alpha$ is a word over a generating set $X$ then, for $x \in X$,
$x \in \alpha$ will mean that $x$ or $x^{-1}$ occurs in the word $\alpha$.

\begin{lemma}
\label{lem:shuffle_ft}
Let $\alpha$ be a word over $X$. Suppose that a sequence of shuffles
of $|\alpha_2|$ letters to the right hand end transforms $\alpha$
to $\alpha_1\alpha_2$. Then $\alpha$ and $\alpha_1\alpha_2$ fellow travel at
distance at most $2 \min(|\alpha_1|,|\alpha_2|).$
\end{lemma}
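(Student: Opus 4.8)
The plan is to work directly with the two paths in $\C{G}{X}$ labelled by $\alpha$ and by $\alpha_1\alpha_2$. Since a shuffle changes neither the length of a word nor the element it represents, $\alpha$ and $\alpha_1\alpha_2$ have the same length and represent the same element of $G$, so the two paths share both endpoints. Writing $u_i$ and $w_i$ for the group elements reached after reading the first $i$ letters of $\alpha$ and of $\alpha_1\alpha_2$ respectively, it therefore suffices to bound the vertex distance $d(u_i,w_i)=|u_i^{-1}w_i|$ by $2\min(|\alpha_1|,|\alpha_2|)$ for every $i$.

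The crux is a structural observation about the shuffle sequence. Call the $|\alpha_2|$ letters carried to the right-hand end the \emph{moving} letters (these constitute $\alpha_2$, in their original relative order) and the remaining letters the \emph{fixed} letters (constituting $\alpha_1$). For a moving letter to reach the right of a fixed letter that originally lay to its right, the two must be interchanged by a shuffle; hence every moving letter commutes with every fixed letter occurring later in $\alpha$. Consequently, inside any prefix $a_1\cdots a_i$ of $\alpha$ one may shuffle all the moving letters rightward past the fixed letters, so that $u_i =_G \sigma_j\,\tau_{i-j}$, where $j$ is the number of fixed letters among $a_1,\dots,a_i$, the element $\sigma_j$ is the product of the first $j$ letters of $\alpha_1$, and $\tau_{i-j}$ is the product of the first $i-j$ letters of $\alpha_2$.

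It then remains to compare $u_i$ with $w_i$ and estimate the length of $u_i^{-1}w_i$, which I would do in two regimes. For $i\le|\alpha_1|$ we have $w_i=\sigma_i$, so cancelling the common prefix of $\alpha_1$ gives $u_i^{-1}w_i =_G \tau_{i-j}^{-1}\,(\text{letters } j{+}1,\dots,i \text{ of } \alpha_1)$, a word of length at most $2(i-j)$; since $i-j$ counts the moving letters in the prefix, it is bounded both by $i\le|\alpha_1|$ and by $|\alpha_2|$. For $i>|\alpha_1|$ I would run the symmetric argument from the right-hand end: because both paths terminate at the same element, $d(u_i,w_i)$ equals the length of the product of the two complementary suffixes, and applying the same sorting trick to the suffix $a_{i+1}\cdots a_m$ (with $m=|\alpha_1|+|\alpha_2|$) yields, after cancellation inside $\alpha_2$, a word of length at most $2(|\alpha_1|-j)$, where $|\alpha_1|-j$ counts the fixed letters in the suffix and is therefore bounded both by $|\alpha_1|$ and by $m-i<|\alpha_2|$. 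In either regime the estimate is $2\min(|\alpha_1|,|\alpha_2|)$, as required.

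I expect the main obstacle to be the commutation observation, together with the bookkeeping that makes the two regimes deliver $\min(|\alpha_1|,|\alpha_2|)$ rather than merely $|\alpha_2|$: comparing prefixes while $i$ is small but switching to a suffix comparison once $i$ exceeds $|\alpha_1|$ is precisely what keeps the bound symmetric in $|\alpha_1|$ and $|\alpha_2|$.
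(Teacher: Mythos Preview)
Your argument is correct and takes a genuinely different route from the paper's. The paper proceeds by induction on $|\alpha_2|$: for $|\alpha_2|=1$ one writes $\alpha=\alpha_{11}\alpha_2\alpha_{12}$ with $\alpha_2$ commuting past $\alpha_{12}$ and checks the $2$-fellow-travelling directly; for $|\alpha_2|>1$ one first shuffles the last letter of $\alpha_2$ to the end of $\alpha$ and then applies the base case together with the inductive hypothesis to the remainder, obtaining the bound $2|\alpha_2|$; a symmetric induction then gives $2|\alpha_1|$. You instead give a direct pointwise estimate of $d(u_i,w_i)$, using the commutation observation to sort each prefix (respectively suffix) of $\alpha$ into a fixed block followed by a moving block, and then cancelling against the corresponding prefix (respectively suffix) of $\alpha_1\alpha_2$. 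Your approach makes the bound at every position explicit --- indeed you extract the sharper estimate $2(i-j)$ or $2(|\alpha_1|-j)$ --- whereas the paper's induction is quicker to state but accumulates the constant through a triangle inequality. One small caveat: your assertion that the moving letters constitute $\alpha_2$ \emph{in their original relative order} is not forced by the most general reading of the hypothesis (a shuffle could in principle swap two commuting moving letters), and your suffix cancellation uses this to identify $\tau_{i-j}$ with a prefix of $\alpha_2$; however, this is the natural reading and is precisely the situation in every application of the lemma in the paper, so it is not a genuine gap.
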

\begin{proof}
We prove by induction on $|\alpha_2|$ that $\alpha$ and  $\alpha_1\alpha_2$
fellow travel at distance at most $2|\alpha_2|$. If $|\alpha_2|=1$ then there
are words $\alpha_{11},\alpha_{12}$ with
$\alpha = \alpha_{11}\alpha_2\alpha_{12}$ and
$\alpha_1 = \alpha_{11}\alpha_{12}$ such that $\alpha_2$ commutes with
all letters in $\alpha_{12}$. It is then easily verified that
$\alpha= \alpha_{11}\alpha_2\alpha_{12}$ and
$\alpha_1\alpha_2 = \alpha_{11}\alpha_{12}\alpha_2$ 2-fellow travel.
For $|\alpha_2|>1$, first shuffle the last letter of $\alpha_2$ to the end of
the word. Then the result follows from the case $|\alpha_2|=1$ and the
inductive hypothesis. By a similar argument, $\alpha$ and  $\alpha_1\alpha_2$
fellow travel at distance at most $2|\alpha_1|$, and the result follows.
\end{proof}

\begin{lemma}
\label{lem:nongeo} 
For $x \in X_v$, suppose that  $\alpha$ is a geodesic word over $X$,
but $\alpha x$ is non-geodesic.
Then $\alpha$ has a suffix $\beta$, with first letter in $X_v$, 
and all other letters either in $X_v$ or in $X_w$ with $w \adj v$,
such that $\beta x$ is non-geodesic.
If $G_v$ is finite, then we can choose $\beta$ so that only its first
letter is in $X_v$.
\end{lemma}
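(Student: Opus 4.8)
The plan is to analyze how $\alpha x$ fails to be geodesic using the normal form machinery from Proposition~\ref{reduced_exp}. Since $\alpha$ is geodesic but $\alpha x$ is not, the word $\alpha x$ can be shortened, and by Proposition~\ref{geodesic_word} this shortening is achieved by a sequence of shuffles followed by a length-reducing replacement of a subword over some single vertex generating set $X_w$. Because $\alpha$ itself is already geodesic, the shortening cannot happen purely within $\alpha$; the appended letter $x$ must participate. So the first step is to track the letter $x$ as it shuffles leftward through a suffix of $\alpha$: a shuffle move past a letter $y$ requires $v(y) \adj v$, and $x$ can only keep moving left so long as the letters it passes lie in vertex groups $G_w$ with $w \adj v$.

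The key step is then to identify the suffix $\beta$ of $\alpha$ consisting of $x$ together with the block of letters it must be shuffled through before meeting another letter of $X_v$. Concretely, I would let $\beta$ be the shortest suffix of $\alpha$ whose first letter lies in $X_v$ and such that, after shuffling $x$ leftward to sit adjacent to that first letter, the resulting syllable in $G_v$ has shorter length (this is the length reduction guaranteed by the non-geodesic hypothesis). All letters strictly between the first letter of $\beta$ and $x$ must commute with $x$, hence lie in $X_w$ with $w \adj v$; and the first letter of $\beta$ lies in $X_v$ by construction. This $\beta$ satisfies that $\beta x$ is non-geodesic, because the same shuffle-and-combine reduction applies within $\beta x$ alone.

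For the final claim, suppose $G_v$ is finite. Here I would argue that once $x$ has been shuffled left to sit immediately after \emph{some} letter $z \in X_v$, the pair $zx$ already represents a single element of $G_v$; if that element is shorter than the two-letter word $zx$ (which, since $X_v = G_v \setminus \{1\}$ is symmetric and every nontrivial element is a generator, it always is unless $zx$ is itself a generator of length one or $z x = 1$), we obtain a reduction using only the first letter of the suffix. Thus we may take $\beta$ to have its first letter in $X_v$ and all remaining letters in adjacent vertex groups, with no further $X_v$-letters; choosing $z$ to be the \emph{first} $X_v$-letter encountered gives the sharpened conclusion.

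The main obstacle I anticipate is making the bookkeeping of the shuffle sequence precise: one must verify that the length-reducing replacement promised by Proposition~\ref{geodesic_word} can be localized to the suffix $\beta x$ rather than being spread across $\alpha$, and that no letter of $X_v$ other than the designated first letter of $\beta$ interferes with the shuffle (which would block $x$ from reaching its reduction partner). Handling the interplay between the commutation constraints (forcing intermediate letters into adjacent vertex groups) and the minimality choice of $\beta$ is where the real care is needed.
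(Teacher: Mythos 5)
Your opening observations are sound and agree with the paper's strategy: since $\alpha$ is geodesic, any shortening of $\alpha x$ supplied by Proposition~\ref{geodesic_word} must involve the letter $x$, and a shuffle of $x$ past a letter $y$ forces $v(y) \adj v$. The genuine gap is in your construction of $\beta$: you assume the length reduction consists of $x$ merging with a \emph{single} letter $z \in X_v$, with every letter strictly between $z$ and $x$ commuting with $x$ and hence lying in adjacent vertex groups. That is only guaranteed when $G_v$ is finite, where $X_v = G_v \setminus \{1\}$ makes every two-letter product over $X_v$ collapse. When $G_v$ is infinite, $X_v$ is an arbitrary finite symmetric generating set, and the reduction may require gathering \emph{several} $X_v$-letters of $\alpha$. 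Concretely, take two adjacent vertices $v \adj w$, let $G_v = \Z = \langle a \rangle$ with $X_v = \{a^{\pm 1}, b^{\pm 1}\}$ where $b =_{G_v} a^3$, let $y \in X_w$, and set $\alpha = aya$, $x = a$. Then $\alpha$ is geodesic and $\alpha x =_G yb$ is not; but the only suffix of the form you allow is the final letter $a$, and $a\cdot x = aa$ is geodesic, so your construction produces no $\beta$ at all. The unique suffix witnessing the lemma is $\beta = aya$, which has a second $X_v$-letter strictly between its first letter and $x$ --- exactly the possibility that the clause ``or in $X_v$'' in the statement exists to allow, and one your argument can never produce (note that $x$ cannot be shuffled past that middle $a$, since shuffles only exchange letters from \emph{distinct} adjacent vertex groups). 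The worry you flag at the end, that another $X_v$-letter might ``block $x$ from reaching its reduction partner,'' is thus not a bookkeeping issue but a real case, fatal to the construction as you set it up.

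The repair is to define $\beta$ with no a priori condition on its first letter --- the paper takes $\beta$ to be the shortest suffix of $\alpha$ such that $\beta x$ is non-geodesic --- and then \emph{derive} its structure from minimality. Writing $\beta = z\gamma$ and applying Proposition~\ref{geodesic_word} to $\beta x$, the vertex-group subword that gets shortened must contain $x$ (otherwise $\beta$ itself would be non-geodesic, contradicting that $\alpha$ is geodesic) and must contain $z$ (otherwise $\gamma x$ would already be non-geodesic, contradicting minimality); hence it is a subword over $X_v$ and $v(z) = v$. Any letter $y$ of $\beta$ with $v(y) \neq v$ starts out between $z$ and $x$ and must leave that region to allow a contiguous $X_v$-subword from $z$ to $x$, so at some point it is shuffled past a letter of $X_v$, forcing $v(y) \adj v$; letters of $X_v$ inside $\gamma$, by contrast, can never cross $z$ or $x$ and are simply absorbed into the shortened subword. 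The finite case then follows from minimality rather than by your direct pairing argument: if $\gamma$ contained a letter $z' \in X_v$, take the suffix starting at the last such $z'$; all later letters lie in groups adjacent to $v$, so they commute past $z'$, and $z'x$ is trivial or a single generator, making that strictly shorter suffix times $x$ non-geodesic --- contradiction.
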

\begin{proof}
We choose $\beta$ to be the shortest suffix of $\alpha$ such that $\beta x$ is not geodesic,
and let $\beta=z\gamma$ with $z \in X$. 
Now we apply Proposition~\ref{geodesic_word} to see that
we can replace $\beta x$ by a
$G$-equivalent shorter subword by first shuffling generators from commuting 
vertex groups, and then replacing a word from the generators in one of the
vertex groups $G_u$ by a $G_u$-equivalent shorter word. If $u \ne v$, then
we could carry out those shuffles that do not involve $x$ in $\beta$ and shorten
$\beta$, contradicting the geodesity of $\beta$. So we must have $u = v$,
and similarly $u=v(z)$, so $v(z)=v$ as claimed. Furthermore, the word
over $X_u$ that is shortened must start with $z$ and end with $x$.

Let $y \in \beta $ with $v(y) \ne v$. If $v(y) \nonadj v$,
then $y$ would remain between $z$ and $x$
after the shuffles,
and so it would not be possible to create a subword over $X_v$ starting with
$z$ and ending with $x$; hence  we must have $v(y) \adj v$.

The final statement
now follows immediately from the facts that the choice of generating
set for a finite vertex group ensures that geodesic words in that group
have length at most 1.
\end{proof}


The remainder of the section is devoted to the proof of
Proposition~\ref{prop:gphyp}.

We need to prove the uniform thinness of geodesic bigons.
From the Condition (i) that vertex groups are hyperbolic we deduce
the existence of a constant $k$ such that for each vertex $v$
bigons over $X_v$ are uniformly $k$-thin.

The endpoints of
geodesic bigons in $\C{G}{X}$ may be at vertices or on edges of
$\C{G}{X}$ but it is sufficient to prove their uniform thinness
in two situations: when both endpoints of the bigon lie on vertices of
$\C{G}{X}$, and when one endpoint lies at a vertex and the other lies at the
midpoint of an edge. In other words, we have to prove that there exist
constants $K,K'$ such that:
\begin{mylist}
\item[(a)] If $\alpha_1,\alpha_2$ are geodesic words over $X$ with $\alpha_1=_G \alpha_2$ than
$\alpha_1$ and $\alpha_2$ $K$-fellow travel;
\item[(b)] If $\alpha_1,\alpha_2$ are geodesic words over $X$ with $|\alpha_1|=|\alpha_2|$
and $\alpha_1x=_G \alpha_2$ for some $x \in X$, then $\alpha_1$ and $\alpha_2$ $K'$-fellow travel.
\end{mylist}

In fact we can reduce consideration of Case (b) easily to Case (a),
as follows.

Suppose that $\alpha_1,\alpha_2,x$ are as in (b), and
let $\alpha_1 = \eta\beta$, where $\beta$ is the
suffix of $\alpha_1$ arising from Lemma~\ref{lem:nongeo}.
Let $v=v(x)$.
Now suppose that shuffling all the letters of $X_v$ to the right
hand end transforms $\beta$ to $\beta'_v\beta_v$, where $\beta$ is a maximal
suffix over $X_v$.
It follows from Lemma~\ref{lem:nongeo} that
$\beta_v x$ is non-geodesic, and hence equal in $G_v$ to some word $\gamma$ over $X_v$ with
$|\gamma| = |\beta_v|$.   

If $|G_v|$ is finite, then $|\beta_v|=|\gamma|=1$, and this fact together
with Lemma~\ref{lem:shuffle_ft}
ensures that $\beta x$ and $\beta'_v\gamma$ 2-fellow
travel.

If $|G_v|$ is infinite then the Conditions (ii) and (iii) imply that
the vertex groups $G_u$ containing the letters of $\beta'_v$
are finite and commute with each other. Then geodesity of $\beta'_v$
implies that for any vertex $u$ at most one letter of $\beta'_v$ comes from
$X_u$, and so
$|\beta'_v| \le n$, and we can apply Lemma~\ref{lem:shuffle_ft}
to see that $\beta x$ $2n$-fellow travels
with  $\beta'_v\beta_vx$. Since $\beta_v x$ and $\gamma$ $k$-fellow travel,
it follows that $\beta x$ and $\beta'_v\gamma$ $(2n+k)$-fellow travel.

In either case, we see that $\alpha_1x=\eta\beta x$ and $\eta\beta'_v\gamma$
$(2n+k)$-fellow travel and so, since $\eta\beta'_v\gamma$ is
a geodesic equal in $G$ to $\alpha_2$,  (b) would follow from (a). 

So now assume that $\alpha_1$ and $\alpha_2$ are as in (a).
We shall prove by induction on $|\alpha_1|$ that they fellow travel at
distance $K=4n + k$. This is clear when $|\alpha_1|=0$, so assume
that $|\alpha_1|>0$.
where 
Let $\alpha_1 = \eta_1\gamma_1$ and $\alpha_2=\eta_2\gamma_2$, where,
for $i=1,2$, $\gamma_i$ is the
maximal suffix of $\alpha_i$ all of whose letters come from commuting
vertex groups.
Geodesity of $\alpha_i$ implies that $\gamma_i$ can
contain at most one letter from each finite vertex group.

Suppose first that at least one of the words $\gamma_i$, say $\gamma_1$,
contains a letter from an infinite vertex group, $G_v$.

Let $y$ be the last such letter in $\gamma_1$.
Then $y$ can be shuffled to the end of $\gamma_1$ and so $\alpha_2 y^{-1}$ is
not geodesic. We apply Lemma~\ref{lem:nongeo} to $\alpha_2$ and $y^{-1}$, and let $\beta$ be the
resulting suffix of $\alpha_2$. By Conditions (ii) and (iii), for each
$z \in \beta$, either $v(z)=v$ or $G_{v(z)}$ is finite, and any two such
vertices are adjacent in $\Gamma$.
So $\beta$ is a suffix of $\gamma_2$, and hence $\gamma_2$ also
contains a letter from $X_v$. From Condition (ii), we see that
$G_v$ is the unique infinite vertex group with letters in
$\gamma_1$ or $\gamma_2$.

For $i=1,2$, let $\gamma_{i,v}$ and $\gamma'_{i,v}$ be the words obtained from $\gamma_i$ by
deleting all letters not in $X_v$, or in $X_v$, respectively.
Conditions (ii) and (iii) ensure that $|\gamma'_{i,v}| \le n$, and hence
by Lemma~\ref{lem:shuffle_ft} 
$\gamma_i =_G \gamma'_{i,v}\gamma_{i,v}$, and $\gamma_i$ $2n$-fellow
travels with 
$\gamma'_{i,v}\gamma_{i,v}$.

We claim that $\gamma_{1,v} =_G \gamma_{2,v}$. For if not, assume without loss
that $|\gamma_{2,v}| \le |\gamma_{1,v}|$, and let $\alpha$ be a geodesic word over $X_v$ for
$\gamma_{2,v}\gamma_{1,v}^{-1}$. Then $\eta_2\gamma'_{2,v}\alpha =_G \eta_1\gamma'_{1,v}$ with
$|\eta_2\gamma'_{2,v}\alpha| > |\eta_1\gamma'_{1,v}|$
and  we can apply Lemma~\ref{lem:nongeo} again to the shortest
non-geodesic prefix of $\eta_2\gamma'_{2,v}\alpha$ and conclude that $\gamma'_{2,v}$ contains a
letter from $X_v$, a contradiction.

Hence $\gamma_{1,v} =_G \gamma_{2,v}$,
and so (since $\alpha_1=_G \alpha_2$), we also have
$\eta_1\gamma'_{1,v}=_G \eta_2\gamma'_{2,v}$.
Since $\gamma_{1,v}$ and $\gamma_{2,v}$ are geodesic they must have the same length. We know them to be non-trivial, and hence the words $\eta_1\gamma'_{1,v}$
and $\eta_2\gamma'_{2,v}$ both have length less that $|\alpha_1|$,
and we can apply the induction hypothesis to see that they
$K$-fellow travel.

The situation is displayed in Fig.~\ref{fig1}.
We deduce that $\alpha_1$ and $\alpha_2$ $K$-fellow travel, as claimed.

\setlength{\unitlength}{0.75pt}
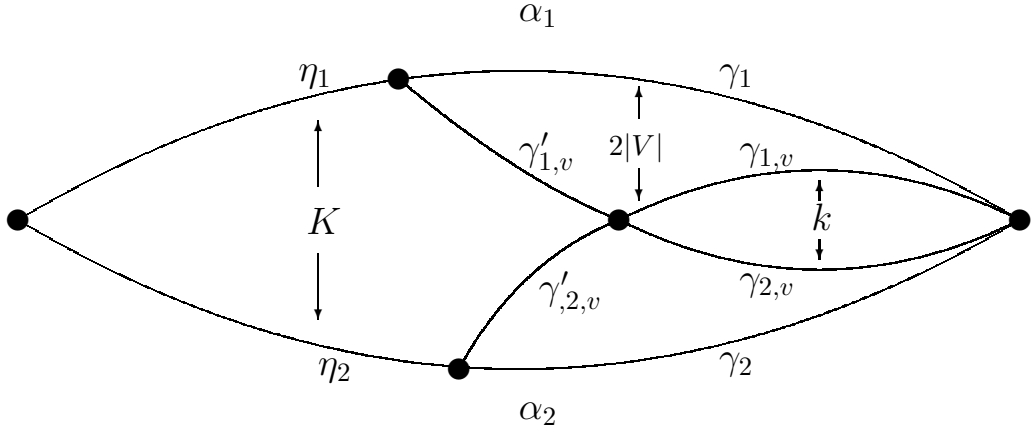
\begin{figure}
\begin{center}
{\large
\begin{picture}(500,260)(-250,-110)

\put(-250,0){\circle*{10}}
\put(-60,71){\circle*{10}}
\put(-30,-75){\circle*{10}}
\put(50,0){\circle*{10}}
\put(250,0){\circle*{10}}
\qbezier(-250,0)(0,150)(250,0) \put(-110,70){$\eta_1$}
\put(100,70){$\gamma_1$}
\put(0,100){$\alpha_1$}
\qbezier(-250,0)(0,-150)(250,0) \put(-100,-78){$\eta_2$}
\put(100,-75){$\gamma_2$}
\put(0,-100){$\alpha_2$}
\qbezier(-60,71)(0,20)(50,0) \put(0,30){$\gamma'_{1,v}$}
\qbezier(-30,-75)(0,-20)(50,0) \put(10,-40){$\gamma'_{,2,v}$}
\qbezier(50,0)(150,50)(250,0) \put(110,30){$\gamma_{1,v}$}
\qbezier(50,0)(150,-50)(250,0) \put(110,-35){$\gamma_{2,v}$}
\put(-100,17){\vector(0,1){33}}
\put(-106,-7){$K$}
\put(-100,-17){\vector(0,-1){33}}
\put(150,10){\vector(0,1){10}}
\put(146,-5){$k$}
\put(150,-10){\vector(0,-1){10}}
\put(60,51){\vector(0,1){16}}
\put(45,33){\small $2|V|$}
\put(60,26){\vector(0,-1){16}}
\end{picture}
}
\caption{\label{fig1} Infinite vertex group in tails}
\end{center}
\end{figure}

So now we may suppose that
$\gamma_1$ and $\gamma_2$ only involve letters from
finite vertex groups, and hence
that $|\gamma_i| \le n$ for $i=1,2$.
We consider two cases.

First suppose that some vertex group $G_v$ is involved in both
$\gamma_1$ and $\gamma_2$; that is, for some $x,y \in G_v$,
$x \in \gamma_1,y \in \gamma_2$.
Then by a similar argument to that in the previous paragraph, we can
use Lemma~\ref{lem:nongeo} to prove that  $x=y$ and, for $i=1,2$, we have
$\alpha_i =_G \eta_i \gamma'_{i,v} x$, where $\gamma'_{i,v}$ is the result of deleting
the single occurrence of $x$ from
$\gamma_i$.  By the inductive hypothesis, $\eta_1\gamma'_{1,v}$ and
$\eta_2\gamma'_{2,v}$ $K$-fellow travel, and
since $|\gamma_i| \le n$, we see that $\alpha_1$ and $\alpha_2$ must do too.
See Fig.~\ref{fig2}.

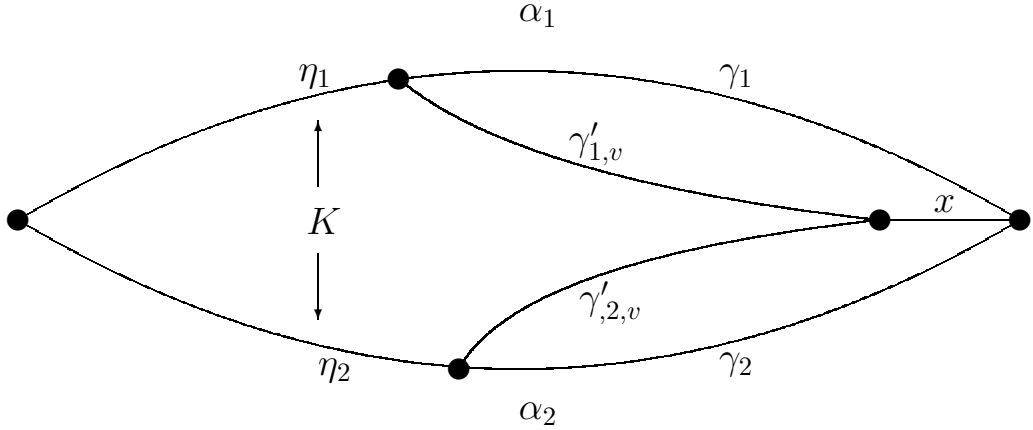
\begin{figure}
\begin{center}
{\large
\begin{picture}(500,260)(-250,-110)

\put(-250,0){\circle*{10}}
\put(-60,71){\circle*{10}}
\put(-30,-75){\circle*{10}}
\put(180,0){\circle*{10}}
\put(250,0){\circle*{10}}
\qbezier(-250,0)(0,150)(250,0) \put(-110,70){$\eta_1$}
\put(100,70){$\gamma_1$}
\put(0,100){$\alpha_1$}
\qbezier(-250,0)(0,-150)(250,0) \put(-100,-78){$\eta_2$}
\put(100,-75){$\gamma_2$}
\put(0,-100){$\alpha_2$}
\qbezier(-60,71)(0,20)(180,0) \put(25,35){$\gamma'_{1,v}$}
\qbezier(-30,-75)(0,-20)(180,0) \put(30,-45){$\gamma'_{,2,v}$}
\put(180,0){\line(1,0){70}} \put(207,4){$x$}
\put(-100,17){\vector(0,1){33}}
\put(-106,-7){$K$}
\put(-100,-17){\vector(0,-1){33}}
\end{picture}
}
\caption{\label{fig2} Common finite vertex group in tails}
\end{center}
\end{figure}

It remains to consider, and eliminate the case where no vertex group
contains letters in both $\gamma_1$ and $\gamma_2$.
If $\eta_1$ or $\eta_2$ is empty, then $|\alpha_i| \le n$
and the result follows immediately, so let $z_1,z_2$
be the final letters of $\eta_1$ and $\eta_2$, respectively.

We shall now show that $v(z_2) \adj v(x)$  for every $x \in \gamma_1$,
and $v(z_1) \adj v(y)$ for every $y \in \gamma_2$.

For suppose that  $x \in \gamma_1$. 
By definition of $\gamma_1$,$z_1 \not \in G_{v(x)}$, so $v(z_1) \neq v(x)$.
Since $x$ can be shuffled to the end of $\gamma_1$, $\alpha_2x^{-1}$
is not geodesic. We apply Lemma~\ref{lem:nongeo} to $\alpha_2$ and $x^{-1}$
and let $\beta$ be the resulting suffix of $\alpha_2$;
then $\beta$ starts with a letter from $X_{v(x)}$.
Since $v(x) \ne v(y)$ for all $y \in \gamma_2$, $\gamma_2$ must
be a proper suffix of $\beta$, and hence, by Lemma~\ref{lem:nongeo},
for each letter $y \in \gamma_2$, $v(x) \adj v(y)$.
Hence $v(x) \ne v(z_2)$, and so $z_2 \gamma_2$ is also a proper suffix of
$\beta$ and thus $v(x) \adj v(z_2)$.
Similarly, if $y \in \gamma_2$ then $v(y) \adj v(z_1)$.

Note that it follows from the above that $v(z_1) \neq v(z_2)$,
since the maximality of $\gamma_1$ as a suffix of $\alpha_1$
of letters from commuting vertex groups ensures that we cannot 
have $v(z_1) \adj v(x)$ for all $x \in \gamma_1$.

Next we show that $v(z_1) \adj v(z_2)$. For
if we multiply $\alpha_1$ by each of the letters of $\gamma_2^{-1}$ in turn,
then Lemma~\ref{lem:nongeo} ensures that each such letter
shuffles past the suffix $z_1\gamma_1$ of $\alpha_1$ and merges with
an earlier letter in $\eta_1$. We end up with a geodesic word $\alpha'_1$ with
$\alpha'_1 =_G \eta_2$, where $\alpha'_1$ still has $z_1 \gamma_1$ as a suffix. 
Then $\alpha'_1 z_2^{-1}$ is not geodesic, and Lemma~\ref{lem:nongeo} now
implies that $v(z_1) \adj v(z_2)$.

So now, we can choose $x \in \gamma_1$ with $v(z_1) \nonadj v(x)$
and $y \in \gamma_2$ with $v(z_2) \nonadj v(y)$, and then the subgraph
of $\Gamma$ induced on  $v(z_1), v(y), v(x), v(z_2)$
is a cycle of length 4, and Condition (iv) is violated.

Hence this final case cannot occur, and the proofs of
Proposition~\ref{prop:gphyp} and Theorem~\ref{gphyperbolic} are complete.

\section*{Acknowledgments}

The second author would like to thank the Mathematics
Department of the University of Neuch\^atel, Switzerland, for its generous
hospitality and support during the period of research for this article.

\end{document}